\newtheorem{assumption}{Assumption}
\newtheorem{lemma}{Lemma}
\newtheorem{theorem}{Theorem}
\let\mr=\mathrm
\begin{document}
\begin{CJK}{UTF8}{gbsn}
\begin{frontmatter}

\title{Supercloseness of the HDG method on Shishkin mesh for a singularly perturbed convection diffusion problem in 2D\tnoteref{funding} }

\tnotetext[funding]{
National Natural Science Foundation of China (11771257) and and Shandong Provincial Natural Science Foundation, China (ZR2023YQ002) support our paper.
}

\author[label1] {Xiaoqi Ma \fnref{cor1}}
\author[label1] {Jin Zhang \corref{cor2}}
\cortext[cor2] {Corresponding email: jinzhangalex@sdnu.edu.cn }
\fntext[cor1] {Address: xiaoqiMa@hotmail.com }
\address[label1]{School of Mathematics and Statistics, Shandong Normal University, Jinan 250014, China}

\begin{abstract}
In this paper, we present the first parameter uniform convergence analysis of a hybridizable discontinuous Galerkin (HDG) method based on a Shishkin mesh for a singularly perturbed convection-diffusion problem in 2D. {\color{red}The main challenge of this problem lies in the estimation of the convection term in the layer, the upper bound of which is often not given effectively by general methods. To solve this problem, a novel error control technique is used and reasonable assumptions are made for the stabilization function.} The results show that, when using polynomials with a degree not greater than $k$, the method achieves supercloseness of almost $k+\frac{1}{2}$ order in an energy norm, which is independent of the singular perturbation parameter.  Numerical experiment verifies the accuracy of theoretical analysis and confirms the efficiency of this method.
\end{abstract}

\begin{keyword}
Singular perturbation, convection-diffusion, Shishkin mesh, HDG method,  Superclosness
\end{keyword}
\end{frontmatter}
\section{Introduction}
The following singularly perturbed convection-diffusion problem is discussed:
\begin{equation}\label{eq:S-1}
\begin{aligned}
& Lu:=-\varepsilon\Delta u+\bm{\beta}\cdot\nabla u+cu=f, \quad \text{in $\Omega:= (0,1)^{2}$},\\
& u=0,\quad \text{on $\partial\Omega$},
\end{aligned}
\end{equation}
where $\varepsilon$ is called the perturbation parameter and satisfies $0<\varepsilon \ll 1$. For all $(x, y)\in\overline{\Omega}$, we make an assumption that $\bm{\beta}(x, y)\ge (\beta_{1}, \beta_{2})>(0, 0)$, $c(x, y)\ge 0$ and
\begin{equation}\label{eq:SPP-condition-1}
c(x, y)-\frac{1}{2}\nabla\cdot\bm{\beta}(x, y)\ge c_{0}>0, \quad\text{on $\overline{\Omega}$}
\end{equation}
with some fixed positive constants $\beta_{1}$, $\beta_{2}$ and $c_{0}$. The functions $\bm{\beta}$, $c$ and $f$ are sufficiently smooth. In fact, these assumptions ensures our model problem  has a unique solution in $H^{2}(\Omega)\cup H_{0}^{1} (\Omega)$ for all $f\in L^{2}(\Omega)$ \cite{Zhu1Zha2:2014--U, Zhu1Zha2:2013--C}. As $\varepsilon\rightarrow 0$, the exact solution $u$ of problem \eqref{eq:S-1} typically exhibits  boundary layers of width $\mathcal{O}(\varepsilon \ln (1/\varepsilon) )$ at $x= 1$ and $y=1$ and a corner layer at $(1, 1)$, see \cite{Roo1Sty2Tob3:2008--R}.  The presence of these layers poses significant challenges to traditional numerical methods. Firstly, traditional methods often struggle to strike a balance between computational efficiency and accuracy.  For example, the standard finite difference or finite element method cannot accurately capture the rapid changes in the solution without a sufficiently dense mesh to detail the layers. This insufficiency can lead to oscillations or instability in numerical solutions, especially when higher-order derivatives are required or when solving on complex geometries. Secondly, most traditional methods are not sensitive enough to the parameters to provide the so-called ``parameter uniform", that is, the quality of the numerical solution should be independent of the size of the perturbation parameter. In practical applications, this limitation reduces the predictive power and reliability of the model, especially in the case of a wide range of parameters.

To address these challenges, Shishkin meshes \cite{shishkin1992discrete} were introduced. These meshes effectively capture key properties of solutions by using finer meshes in areas requiring high resolution, thereby improving the accuracy of numerical solutions. Furthermore, Shishkin meshes have demonstrated the ability to maintain parameter uniform in dealing with singularly perturbed problems \cite{Zhu1Zha2:2014--U, Zhu1Zha2:2013--C, Mil1ORi2Shi3:1996--F, Roo1Zar2:2007--motified}, as they can automatically adjust the mesh structure according to the perturbation parameters to accommodate the sensitivity of the solution.

In this paper, we focus on the hybridizable discontinuous Galerkin (HDG) method \cite{C0c1Gop2Laz3:2009--U}, which inherits and develops many advantages of the discontinuous Galerkin (DG) method, such as its applicability to various partial differential equations, its ability to handle complex geometry and support high order accuary, see \cite{Ma1Zha2:2023--S, Buf1Hug2San3:2006--A, Coc1Shu2:1998--motified, Ku1:2014--O, Lin1Ye2Zha3:2018--motified, Liu1Zha2:2017--A, Xie1Zha2Zha4, Zhu1Tia2Zha3:2011--C, Xie1Zha2:2007--S} for more details. The core advantage of the HDG method lies in expressing the approximate scalar variable and flux element-wise using the approximate trace along the element boundaries, and by enforcing flux continuity to obtain the unique trace at element boundaries. Therefore, compared with the DG method, the HDG method simplifies the global equation system into a system that only involves approximate trace on the boundaries, greatly improving the computational efficiency \cite{Coc1:2016--S}. Based on these advantages, the HDG method has achieved remarkable success in solving various problems, such as Poisson equation \cite{Coc1Gop2Say3:2010--motified, Kir1She2Coc3:2012--T}, convection-diffusion equation \cite{Ngu1Per2Coc3:2009--motified, Fu1Oiu2Zha3:2015--motified}, Stokes equation \cite{Coc1Gop2:2009--motified, Ngu1Per2Coc3:2010--motified}, Navier-Stokes equation \cite{Ngu1Per2Coc3:2011--motified, Ces1Coc2Qiu3:2017--A}, Maxwell equation \cite{Ngu1Per2Coc3:2011--H, Li1Lan2Per3:2014--motified} and others.

{\color{red}Although the HDG method has been widely used in many fields and extensively discussed and improved in several works \cite{Bus1Lom2Sol3:2019--motified, Che1Coc2:2012--A, Coc1Don2Gum3:2008--motified, Coc1Don2:2009--motified, Li1Wan2:2022--H, Qiu1Shi2:2016--motified}, its \emph{a priori} error analysis for convection-dominated diffusion problems was not introduced until 2016 by G. Fu et al \cite{Fu1Oiu2Zha3:2015--motified}. They demonstrated that on general conforming quasi-uniform simplicial meshes, using no more than $k$-degree polynomials, the $L^2$ error for the scalar variable converges with order $k+\frac{1}{2}$. R. Bustinza et al. later extended the use of HDG method on anisotropic triangulations for a convection-dominated diffusion problem and, under certain assumptions of stabilization parameter and mesh, proved that all variables can recover to $k+\frac{1}{2}$ in the $L^2$ norm \cite{Bus1Lom2Sol3:2019--motified}. Despite these significant advancements in the theoretical understanding of the HDG method, none of these studies addressed parameter uniform convergence---a crucial metric for assessing the performance of numerical methods under extreme parameter conditions. Recently, Y. Li et al. \cite{Li1Wan2:2022--H} proposed a parameter uniform HDG method on Shishkin meshes for a singularly perturbed problem and conducted preliminary numerical experiments. While these experimental results demonstrate the effectiveness of the method, corresponding theoretical convergence analysis has yet to be undertaken, indicating that the theoretical foundation of the HDG method regarding parameter uniform still requires further exploration.}

In this paper, we study the parameter uniform convergence of  the HDG method on a Shihskin mesh for a singularly perturbed convection-diffusion problem in 2D. Given the theoretical challenges posed by the convergence analysis of the convection term in the layers, we use an innovative error control technique introduced in \citep[Lemma 3.1]{Wan1Wan2Zha3:2016--L}. This technique controls the derivative of $\xi_{u}:=u_{h}-\Pi_{2}u$ through the energy norm itself (see Lemma \ref{L-1}). Here $u_{h}$ and $\Pi_{2}u$ denote the numerical solution and projection of the exact solution $u$ of the problem \eqref{eq:S-1}, respectively. Despite a negative power of the small parameter still appearing in the upper bound, we can improve the error estimate of the convection term by utilizing small measures of the layer region. Utilizing this strategy and under certain reasonable assumptions about the stabilization function, the superconvergence of almost $k+1/2$ in the energy norm is proved, where $k$ represents the maximum degree of polynomials used to approximate the scalar variable, the flux, and the trace of the scalar variable. 
Numerical experiment further verify the reliability of our theoretical results.

This paper is structured as follows: In Section 2, we introduce \emph{a prior} information of the solution. Then, a HDG method on a Shishkin mesh for this problem is established. Section 3 defines some local projection operators and obtains projection error estimates. Moreover,  an error handling technique is proposed. In Section 4, the supercloseness between the projection and numerical solution in an energy norm is derived. Finally, some numerical results are dedicated to confirm the main theoretical conclusion.

In this paper, a generic positive constant $C$ is independent of $\varepsilon$ and the number of mesh points $N$. Moreover, $k$ is some fixed integer satisfying the condition $k\ge 1$.
\section{\emph{A priori} information, Shishkin mesh and HDG method}\label{sec:mesh,method}
\subsection{\emph{A priori} information}
\begin{theorem}
We decompose the exact solution $u$ of problem \eqref{eq:S-1}  into $u = S+E_{1}+E_{2}+E_{3}$, where for all $(x, y)\in \Omega$ the component functions have the regularity
\begin{equation}\label{eq:decomposition}
\begin{aligned}
& \left|\frac{\partial^{i+j}S}{\partial^{i}x\partial^{j}y}(x, y)\right|\le C,\\
& \left|\frac{\partial^{i+j}E_{1}}{\partial^{i}x\partial^{j}y}(x, y)\right|\le C\varepsilon^{-i}e^{-\beta_{1}(1-x)/\varepsilon},\\
&\left|\frac{\partial^{i+j}E_{2}}{\partial^{i}x\partial^{j}y}(x, y)\right|\le C\varepsilon^{-j}e^{-\beta_{2}(1-y)/\varepsilon},\\
&\left|\frac{\partial^{i+j}E_{3}}{\partial^{i}x\partial^{j}y}(x, y)\right|\le C\varepsilon^{-(i+j)}e^{-[\beta_{1}(1-x)+\beta_{2}(1-y)]/\varepsilon},
\end{aligned}
\end{equation}
for $0\le i+j\le k+2$. Here constant C depends on $\bm{\beta}$, $c$ and $f$.
\end{theorem}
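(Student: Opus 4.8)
The plan is to follow the classical Shishkin-type asymptotic decomposition: build a regular (outer) expansion of $u$ in powers of $\varepsilon$, add inner layer correctors near $x=1$, near $y=1$, and near the corner $(1,1)$, and then control the leftover by a maximum principle. First I would construct the smooth part $S$ as a truncated regular expansion $S=\sum_{i=0}^{q}\varepsilon^{i}v_{i}$ with $q$ a fixed integer chosen large enough in terms of $k$, where $v_{0}$ solves the reduced first-order problem $\bm{\beta}\cdot\nabla v_{0}+cv_{0}=f$ with $v_{0}=0$ on the inflow boundary $\{x=0\}\cup\{y=0\}$, and each $v_{i}$ with $i\ge 1$ solves the transport problem $\bm{\beta}\cdot\nabla v_{i}+cv_{i}=\Delta v_{i-1}$ with homogeneous inflow data. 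Since $\bm{\beta}$, $c$, $f$ are smooth and $\bm{\beta}\ge(\beta_{1},\beta_{2})>(0,0)$, the method of characteristics (together with the compatibility of the data at the inflow corners needed for boundary regularity) yields $v_{i}\in C^{k+2}(\overline{\Omega})$ with bounds independent of $\varepsilon$, which gives the first line of \eqref{eq:decomposition} for $S$; moreover a telescoping computation gives $LS-f=-\varepsilon^{q+1}\Delta v_{q}=\mathcal{O}(\varepsilon^{q+1})$ together with all the derivative bounds needed later.

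Next I would peel off the layers one at a time. For the layer at $x=1$ I would set $E_{1}=\sum_{i=0}^{q}\varepsilon^{i}w_{i}$, each $w_{i}$ written in the stretched variable $\rho=(1-x)/\varepsilon$ and chosen to solve an ODE in $\rho$ (with $y$ a parameter) whose decaying solution is proportional to $e^{-\beta_{1}(1,y)\rho}$, subject to the requirement that $S+E_{1}$ vanish on $x=1$; after using $\bm{\beta}\ge(\beta_{1},\beta_{2})$ and absorbing the $y$-dependence one obtains $\bigl|\partial_{x}^{i}\partial_{y}^{j}E_{1}\bigr|\le C\varepsilon^{-i}e^{-\beta_{1}(1-x)/\varepsilon}$. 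The corrector $E_{2}$ at $y=1$ is built symmetrically, and the corner layer $E_{3}$ is constructed analogously in the two stretched variables $\rho=(1-x)/\varepsilon$ and $\sigma=(1-y)/\varepsilon$ so as to cancel the residual boundary mismatch of $S+E_{1}+E_{2}$ near $(1,1)$; its governing problems are elliptic in $(\rho,\sigma)$ with exponentially localized data, which produces the product bound $\bigl|\partial_{x}^{i}\partial_{y}^{j}E_{3}\bigr|\le C\varepsilon^{-(i+j)}e^{-[\beta_{1}(1-x)+\beta_{2}(1-y)]/\varepsilon}$.

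Finally I would estimate the remainder $R:=u-(S+E_{1}+E_{2}+E_{3})$. By construction $R=0$ on $\partial\Omega$ (the layer correctors are only exponentially small away from their generating edge, and that exponentially small mismatch is absorbed), and $LR=\mathcal{O}(\varepsilon^{q+1-m})$ for a fixed $m=m(k)$, in $L^{\infty}$ and in the Hölder norms used below. Because of \eqref{eq:SPP-condition-1} the operator $L$ obeys a comparison principle, so a barrier of the form $C\varepsilon^{q+1-m}\bigl(1+e^{-\beta_{1}(1-x)/\varepsilon}+e^{-\beta_{2}(1-y)/\varepsilon}\bigr)$ bounds $R$ pointwise; for the derivative bounds one rescales each subregion (of width $\mathcal{O}(\varepsilon\ln(1/\varepsilon))$ inside the layers, $\mathcal{O}(1)$ away from them) to unit size, applies interior Schauder estimates to the rescaled problem, and transfers the estimate back, which reintroduces exactly the stated negative powers of $\varepsilon$. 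Choosing $q$ large enough that $R$ together with its derivatives up to order $k+2$ is bounded $\varepsilon$-uniformly, one absorbs $R$ into $S$, completing the decomposition.

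The main obstacle I anticipate is keeping every constant genuinely independent of $\varepsilon$ while pushing the derivative bounds all the way to order $k+2$: this forces the regular and layer expansions to be carried to sufficiently many terms, it requires the compatibility conditions at the corners $(0,0)$, $(1,0)$, $(0,1)$ to be strong enough that the characteristic solutions $v_{i}$ and the layer correctors are smooth up to the boundary, and the high-order derivative estimate of the remainder via $\varepsilon$-scaled Schauder estimates is the most technical part of the argument.
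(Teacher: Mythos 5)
Your outline reproduces the standard Shishkin-type decomposition argument---a truncated regular expansion, edge correctors in the stretched variables $(1-x)/\varepsilon$ and $(1-y)/\varepsilon$, a corner corrector, and a remainder controlled by the comparison principle together with $\varepsilon$-scaled Schauder estimates---which is exactly the machinery of Roos--Stynes--Tobiska \cite{Roo1Sty2Tob3:2008--R} that the paper invokes; the paper itself offers no independent proof beyond that citation. So your approach is essentially the same as the paper's, with the caveats you already flag (sufficient corner compatibility of the data and carrying the expansions far enough to reach order $k+2$) being precisely the hypotheses under which the cited arguments deliver the stated bounds.
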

\begin{proof}
By means of the arguments in \cite{Roo1Sty2Tob3:2008--R}, we can draw this conclusion immediately. 
\end{proof}

\subsection{Shishkin mesh}
Below we will introduce a piecewise uniform---Shishkin mesh, which are refined near the sides $x = 1$ and $y = 1$ of $\Omega$. 

Let's present some basic concepts. The set of mesh points are defined by $\Omega_{N}=\{(x_{i}, y_{j})\in \overline{\Omega} : i, j=0, 1, \cdots, N\}$. 
A mesh rectangle is often written as  $K_{ij}=I_{i}\times J_{j} = (x_{i-1}, x_{i})\times (y_{j-1}, y_{j})$ for $1\le i, j\le N$.
Furthermore, $h_{i, x}=x_{i}-x_{i-1}$ and $h_{j, y}=y_{j}-y_{j-1}$ denote the length of $I_{i}$ and $J_{j}$, respectively. 

In order to distinguish the layer part from the smooth part, we introduce two transition points $\tau_{x}$ and $\tau_{y}$, where
$$\tau_{x}=\min\left\{\frac{1}{2}, \frac{\sigma_{x} \varepsilon}{\beta_{1}} \ln N\right\},\quad \tau_{y}=\min\left\{\frac{1}{2}, \frac{\sigma_{y} \varepsilon}{\beta_{2}} \ln N\right\}.$$
Here $\beta_{1}$ and $\beta_{2}$ are the lower bounds for the convective part $\bm{\beta}$. For convenience, we select $\sigma_{x}=\sigma_{y}=\sigma$ and $\sigma\ge k+1$. Suppose that $N\in \mathbb{N}$ can be divided by $4$ and $N\ge 4$. $[0, 1-\tau_{x}]$ and $[1-\tau_{x}, 1]$ are each divided into $N/2$ equidistant mesh intervals, and the division of $y$-direction is similar. Shishkin mesh points can be denoted as
\begin{equation*}
x_{i}=
\left\{
\begin{aligned}
&\frac{2(1-\tau_{x})}{N}i\quad &&\text{for $i=0, 1, \cdots, N/2$},\\
&1-\tau_{x}+\frac{2\tau_{x}}{N}(i-\frac{N}{2}),&&\text{for $i=N/2+1, \cdots, N$}
\end{aligned}
\right.
\end{equation*}
and 
\begin{equation*}
y_{j}=
\left\{
\begin{aligned}
&\frac{2(1-\tau_{y})}{N}j\quad &&\text{for $j=0, 1, \cdots, N/2$},\\
&1-\tau_{y}+\frac{2\tau_{y}}{N}(j-\frac{N}{2}),&&\text{for $j=N/2+1, \cdots, N$}.
\end{aligned}
\right.
\end{equation*}

Now the domain $\Omega$ is divided into four parts, as shown in Figure \ref{WW-2}:
\begin{equation*}
\begin{aligned}
&\Omega_{s}=(0, 1-\tau_{x})\times(0, 1-\tau_{y}),\quad \Omega_{x}=(1-\tau_{x}, 1)\times(0, 1-\tau_{y}),\\
&\Omega_{y}=(0, 1-\tau_{x})\times(1-\tau_{y}, 1),\quad\Omega_{xy}=(1-\tau_{x}, 1)\times (1-\tau_{y}, 1).
\end{aligned}
\end{equation*}
According to the above definition, there is $x_{N/2}=1-\tau_{x}$ and $y_{N/2}=1-\tau_{y}$.

\begin{assumption}\label{ass:S-1}
In this paper, we shall suppose that
\begin{equation*}
\varepsilon \le C N^{-1},
\end{equation*}
which is not a limitation in our practice life.
\end{assumption}

\subsection{The HDG method}
Let $\mathcal{T}_{h}:=\{K_{ij}\}_{i, j=1, \cdots, N}$, where each rectangular mesh element $K_{ij}:= I_{i}\times J_{j}:= (x_{i-1}, x_{i})\times (y_{j-1}, y_{j})$. Suppose that $K\in\mathcal{T}_{h}$ is a general open set, $\partial K$ represents the set of edges of $K$, and $\partial\mathcal{T}_{h}:=\{\partial K: K\in \mathcal{T}_{h}\}$.

 Then we use $\mathcal{E}_{h}^{0}$ to denote the set of interior edges and $\mathcal{E}^{\alpha}_{h}$ to denote the set of edges on the boundary. If there are two elements $K^{+}$ and $K^{-}$ in $\mathcal{T}_{h}$ such that $e=\partial K^{+}\cap \partial K^{-}$, we call $e\in \mathcal{E}_{h}^{0}$.
If there is an element $K$ in $\mathcal{T}_{h}$ that causes $e=\partial K\cap \partial\Omega$, we call $e \in \mathcal{E}_{h}^{\alpha}$.
Here $\mathcal{E}_{h}=\mathcal{E}_{h}^{0}\cup\mathcal{E}_{h}^{\alpha}$.

The standard Sobolev spaces $W^{m,l}(D)$ and $L^{l}(D)$ will be used, where $D$ is any measurable one-dimensional subset of $[0, 1]$ or any measurable two-dimensional subset of $\Omega$. The $L^{2}(D)$ norm is denoted by $\Vert \cdot\Vert_{D}$, the $L^{\infty}(D)$ norm by $\Vert\cdot\Vert_{L^{\infty}(D)}$, and $(\cdot, \cdot)_{D}$ denotes the $L^{2}(D)$ inner product. The subscript $D$ will always be omitted when $D = \Omega$.

 Define inner product
\begin{equation*}
\begin{aligned}
&(\eta, w)_{\mathcal{T}_{h}}:=\sum_{K\in\mathcal{T}_{h}}\int_{K}\eta w,\\
&<z, \omega>_{\partial\mathcal{T}_{h}}:=\sum_{K\in \mathcal{T}_{h}}\int_{\partial K}z\omega\mr{d}s
\end{aligned}
\end{equation*}
for any function 
$\eta, w\in L^{2}(\Omega)$
and $z, \omega\in L^{2}(\partial\mathcal{T}_{h})$. The definition of $(\cdot, \cdot)_{\mathcal{T}_{h}}$ for vector functions can be given in the same way. 
Then we define the finite element space by
\begin{equation*}
\begin{aligned}
&\bm{V}_{h}:=\{\bm{v}\in [L^{2}(\Omega)]^{2}: \bm{v}|_{K}\in [\mathcal{Q}^{k}(K)]^{2},\quad \forall K\in \mathcal{T}_{h}\},\\
&W_{h}:=\{v\in L^{2}(\Omega), w|_{K}\in \mathcal{Q}^{k}(K), \quad \forall K\in \mathcal{T}_{h}\},\\
&M_{h}:=\{\mu\in L^{2}(\mathcal{E}_{h}), \mu|_{e}\in \mathbb{P}^{k}(e),\quad \forall e\in \mathcal{E}_{h}\},\\
&M_{h}(0):=\{\mu\in M_{h}: <\mu,\xi>_{\partial\Omega}=0,\quad\forall \xi\in M_{h}\},
\end{aligned}
\end{equation*}
where $\mathcal{Q}^{k}(K)$ is the tensor products polynomial space of degree at most $k$ in one variable with $k\ge 1$, and $\mathbb{P}^{k}(e)$ is the set of polynomials defined on edges $e$ with the highest degree not exceeding $k$.

Now rewrite the problem \eqref{eq:S-1} into the following first-order system using $q=-\varepsilon \nabla u$
\begin{equation*}
\left\{
\begin{aligned}
&\bm{q}+\varepsilon \nabla u=0,\quad&& \text{in $\Omega$},\\
&\nabla\cdot \bm{q}+\bm{\beta}\cdot\nabla u+cu=f\quad && \text{in $\Omega$},\\
&u=0\quad &&\text{on $\partial\Omega$}.
\end{aligned}
\right.
\end{equation*}
The numerical approximation solution $(\bm{q}_{h}, u_{h}, \hat{u}_{h})\in \bm{V}_{h}\times W_{h}\times M_{h}(0)$ can be defined as a solution of the system
\begin{equation*}
\left\{
\begin{aligned}
&\varepsilon^{-1}(\bm{q}_{h}, \bm{r})_{\mathcal{T}_{h}}-(u_{h}, \nabla\cdot \bm{r})_{\mathcal{T}_{h}}+<\hat{u}_{h}, \bm{r}\cdot \bm{n}>_{\partial\mathcal{T}_{h}}=0\\
&-(\bm{q}_{h}+\bm{\beta}u_{h}, \nabla w)_{\mathcal{T}_{h}}+((c-\nabla\cdot \bm{\beta})u_{h}, w)_{\mathcal{T}_{h}}+<(\widehat{\bm{q}_{h}}+\widehat{\bm{\beta}u_{h}})\cdot \bm{n}, w>_{\partial\mathcal{T}_{h}}=(f, w)_{\mathcal{T}_{h}}\\
&<(\widehat{\bm{q}_{h}}+\widehat{\bm{\beta}u_{h}})\cdot \bm{n}, \mu>_{\partial\mathcal{T}_{h}/\partial\Omega}=0,\\
\end{aligned}
\right.
\end{equation*}
for $(\bm{r}, w, \mu)\in \bm{V}_{h}\times W_{h}\times M_{h}(0)$, where
\begin{equation*}
\begin{aligned}
(\widehat{\bm{q}_{h}}+\widehat{\bm{\beta}u_{h}})\cdot\bm{n}:=\bm{q}_{h}\cdot\bm{n}+\bm{\beta}\cdot \bm{n}\hat{u}_{h}+\tau(u_{h}-\hat{u}_{h}),\quad \text{on $\partial\mathcal{T}_{h}$}.
\end{aligned}
\end{equation*}
Here the numerical trace $\hat{u}_{h}$ is an approximation of $u$ on $\mathcal{E}_{h}$, $\tau$ is a piecewise nonnegative stabilization function defined on $\partial\mathcal{T}_{h}$, and $\bm{n}$ is the unit outward normal vector on $\partial K$. In this paper, the stabilization function $\tau$ is  taken as a constant that satisfy
\begin{equation*}
\begin{aligned}
\tau-\frac{1}{2}\bm{\beta}\cdot \bm{n}> 0,\quad \text{on $\partial\mathcal{T}_{h}$}.
\end{aligned}
\end{equation*}
For the subsequent convergence analysis, we consider that $\tau$ satisfies the following properties:
\begin{enumerate}[(1)]
\item $\exists C_{0} > 0$ such that $\max\limits_{e\in \partial K} \tau|e =: \tau^{max} \le C_{0},\quad \forall K\in \mathcal{T}_{h}$.
\item $\exists C_{1} > 0$ such that $\tau^{v}:=\min\limits_{e\in \partial K}\inf\limits_{e}(\tau-\frac{1}{2}\bm{\beta}\cdot \bm{n})\ge C_{1},\quad \forall K\in \mathcal{T}_{h}$.
\item $\exists C_{2} > 0$ such that $\inf\limits_{e}(\tau-\frac{1}{2}\bm{\beta}\cdot \bm{n})\ge C_{2}\max\limits_{e} |\bm{\beta}\cdot \bm{n}|$ for all $e\in \partial K, K\in \mathcal{T}_{h}$.
\end{enumerate}


Then, the weak formula of the problem \eqref{eq:S-1} is that: Find $(\bm{q}_{h}, u_{h}, \hat{u}_{h})\in \bm{V}_{h}\times W_{h}\times M_{h}(0)$ such that
\begin{equation}\label{eq:SD}
B((\bm{q}_{h}, u_{h}, \hat{u}_{h}); (\bm{r}, w, \mu))=F(w) \quad \forall (\bm{r}, w, \mu) \in \bm{V}_{h}\times W_{h}\times M_{h}(0),
\end{equation}
where
\begin{equation*}
\begin{aligned}
B((\bm{q}_{h}, u_{h}, \hat{u}_{h}); (\bm{r}, w, \mu))&=\varepsilon^{-1}(\bm{q}_{h}, \bm{r})_{\mathcal{T}_{h}}-(u_{h}, \nabla\cdot \bm{r})_{\mathcal{T}_{h}}+<\hat{u}_{h}, \bm{r}\cdot \bm{n}>_{\partial\mathcal{T}_{h}}\\
&-(\bm{q}_{h}+\bm{\beta}u_{h}, \nabla w)_{\mathcal{T}_{h}}+((c-\nabla\cdot \bm{\beta})u_{h}, w)_{\mathcal{T}_{h}}\\
&+<(\widehat{\bm{q}_{h}}+\widehat{\bm{\beta}u_{h}})\cdot \bm{n}, w-\mu>_{\partial\mathcal{T}_{h}}
\end{aligned}
\end{equation*}
and $F(w)=(f, w)_{\mathcal{T}_{h}}.$ 

\begin{lemma}\label{Galerkin orthogonality property}
Let $(\bm{q}, u)$ be the solution of \eqref{eq:S-1}, then the following Galerkin orthogonality property
\begin{equation*}
B((\bm{q}-\bm{q}_{h}, u-u_{h}, u-\hat{u}_{h}), (\bm{r}, w, \mu))=0
\end{equation*}
holds for all $(\bm{r}, w, \mu) \in \bm{V}_{h}\times W_{h}\times M_{h}(0)$, where $B(\cdot, \cdot)$ is the bilinear form defined in \eqref{eq:SD}.
\end{lemma}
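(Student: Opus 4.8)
The plan is to verify the Galerkin orthogonality by showing that the exact solution triple $(\bm{q}, u, u|_{\mathcal{E}_h})$, where $\bm{q} = -\varepsilon\nabla u$, satisfies the same discrete variational identity \eqref{eq:SD} that defines $(\bm{q}_h, u_h, \hat{u}_h)$. Since $B$ is linear (indeed bilinear) in its first argument, subtracting the two identities then yields $B((\bm{q}-\bm{q}_h, u-u_h, u-\hat{u}_h);(\bm{r},w,\mu)) = F(w) - F(w) = 0$ for all test functions, which is exactly the claim. So the whole proof reduces to checking consistency: the continuous solution, when plugged into the HDG bilinear form, reproduces $F(w)$.

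To do this I would take $(\bm{q}, u)$ with $\bm{q}+\varepsilon\nabla u = 0$ and $\nabla\cdot\bm{q} + \bm{\beta}\cdot\nabla u + cu = f$ pointwise, and note that $u\in H^2(\Omega)\cap H_0^1(\Omega)$ is single-valued across interior edges and vanishes on $\partial\Omega$, so its trace lies in $M_h(0)$ (after noting the relevant identity is tested only against polynomial traces); likewise $\bm{q}\cdot\bm{n}$ is single-valued on interior edges. I would then go term by term through the three equations in the definition of the scheme. For the first equation, multiply $\bm{q} = -\varepsilon\nabla u$ by $\varepsilon^{-1}\bm{r}$, integrate over each $K$, integrate by parts the term $(\nabla u, \bm{r})_K$ to get $-(u,\nabla\cdot\bm{r})_K + \langle u, \bm{r}\cdot\bm{n}\rangle_{\partial K}$, and observe that $\hat{u}_h$ is replaced by $u$; this gives exactly the first identity. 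For the second equation, integrate the PDE against $w$, integrate by parts $(\nabla\cdot\bm{q} + \bm{\beta}\cdot\nabla u, w)_K$ to move derivatives onto $w$, producing $-(\bm{q}+\bm{\beta}u,\nabla w)_K + ((c-\nabla\cdot\bm{\beta})u, w)_K + \langle(\bm{q}+\bm{\beta}u)\cdot\bm{n}, w\rangle_{\partial K}$; the key point is that the numerical flux $(\widehat{\bm{q}_h}+\widehat{\bm{\beta}u_h})\cdot\bm{n} = \bm{q}_h\cdot\bm{n} + \bm{\beta}\cdot\bm{n}\,\hat{u}_h + \tau(u_h - \hat{u}_h)$ collapses to the exact flux $(\bm{q}+\bm{\beta}u)\cdot\bm{n}$ when $u_h, \hat{u}_h$ are both replaced by $u$, since the stabilization term $\tau(u-u) = 0$ vanishes. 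For the third (conservativity) equation, since the exact flux $(\bm{q}+\bm{\beta}u)\cdot\bm{n}$ is continuous across each interior edge $e = \partial K^+\cap\partial K^-$ (normals of opposite sign) and $\mu\in M_h(0)$ is single-valued there, the sum $\langle(\bm{q}+\bm{\beta}u)\cdot\bm{n}, \mu\rangle_{\partial\mathcal{T}_h\setminus\partial\Omega}$ telescopes to zero edge by edge.

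Collecting these three identities shows $B((\bm{q},u,u);(\bm{r},w,\mu)) = F(w)$, and subtracting \eqref{eq:SD} completes the argument. I do not expect a genuine obstacle here; the only points requiring care are bookkeeping ones: making sure the trace of $u$ is an admissible member of $M_h(0)$ (this uses $u=0$ on $\partial\Omega$ and that the third equation is tested only against $M_h(0)$, together with the definition of $M_h(0)$), confirming the single-valuedness of $\bm{q}\cdot\bm{n}$ and $u$ on interior edges so the edge sums rearrange correctly, and checking the sign conventions in the elementwise integration by parts so that the boundary terms match the $\langle\cdot,\cdot\rangle_{\partial\mathcal{T}_h}$ terms in $B$. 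The essential structural fact is simply that the HDG numerical trace and numerical flux are \emph{consistent}, i.e.\ they reduce to the exact trace and exact flux on the exact solution, which is built into the design of the scheme.
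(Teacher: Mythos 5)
Your argument is correct and is essentially the paper's own proof: the paper merely cites the ``simple calculations'' of Fu--Qiu--Zhang (Section 4.2), which consist precisely of your consistency argument --- the exact solution $(\bm{q},u,u|_{\mathcal{E}_h})$ satisfies the three HDG identities because the numerical trace and flux reduce to the exact ones (the stabilization term $\tau(u-u)$ vanishing, the exact flux being single-valued across interior edges, and $\mu\in M_h(0)$ killing the boundary contribution), after which bilinearity in the first argument gives $B(\bm{e},(\bm{r},w,\mu))=F(w)-F(w)=0$. Your bookkeeping caveats (sign conventions in the elementwise integration by parts, single-valuedness of $u$ and $\bm{q}\cdot\bm{n}$, and $u=0$ on $\partial\Omega$) are exactly the points that make the cited computation go through.
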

\begin{proof}
Through some simple calculations, this lemma can be easily derived, see \citep[Section 4.2]{Fu1Oiu2Zha3:2015--motified}.
\end{proof}

The energy norm related to $B(\cdot, \cdot)$ is defined as: for all $(\bm{r}, w, \mu) \in \bm{V}_{h}\times W_{h}\times M_{h}(0)$, there is
\begin{equation}\label{eq:SS-1}
\begin{aligned}
|||(\bm{r}, w, \mu)|||^{2}:=
\varepsilon^{-1}\Vert \bm{r}\Vert_{\mathcal{T}_{h}}^{2}+\Vert (c-\frac{1}{2}\nabla\cdot\bm{\beta})^{\frac{1}{2}}w\Vert_{\mathcal{T}_{h}}^{2}+ \Vert(\tau-\frac{1}{2}\bm{\beta}\cdot \bm{n})^{\frac{1}{2}}(w-\mu)\Vert^{2}_{\partial\mathcal{T}_{h}}.
\end{aligned}
\end{equation}
And from \eqref{eq:SPP-condition-1}, the bilinear form $B(\cdot, \cdot)$ is coercive, i.e., 
\begin{equation}\label{eq:coercity}
B((\bm{r}, w, \mu), (\bm{r}, w, \mu)) \ge |||(\bm{r}, w, \mu)|||^{2},\quad \forall (\bm{r}, w, \mu)\in \bm{V}_{h}\times W_{h}\times M_{h}(0).
\end{equation}

%
%
%

\section{Projection and projection errors}

\subsection{Projection}
In this paper, for $\bm{q}$, $u$ and $\hat{u}_{h}$ we introduce the standard local $L^{2}$-projection operators $\bm{\Pi_{1}}$, $\Pi_{2}$ and $P$ onto $\bm{V}_{h}$, $W_{h}$ and $M_{h}$, respectively, see \citep[Section 3]{Qiu1Shi2:2016--motified} for more details.

For any $u\in L^{2}(K)$, we define $\Pi_{2}u \in\mathcal{Q}^{k}(K)$ as follows:
\begin{align}\label{eq:J-1}
&\int_{K}(\Pi_{2}u)v\mr{d}x\mr{d}y=\int_{K}uv\mr{d}x\mr{d}y,\quad \forall v\in \mathcal{Q}^{k}(K),
\end{align}
where $\mathcal{Q}^{k}(K)$ is the tensor products polynomial space of degree at most $k$ in one variable with $k\ge 1$. The local $L^2$ projection $\bm{\Pi_{1}}\bm{q}$ for the vector function $\bm{q}$ is also given in the same way.

Moreover, we will present the local $L^{2}$ projection operator $P$ from $L^{2}(\mathcal{E}_{h})$ onto $M_{h}$ (see \citep[Section 5]{Che1:2021--O}):
For any $z\in L^{2}(I_{i})$, $Pz\in \mathbb{P}^{k}(I_{i})$ is defined by
\begin{align}\label{eq:J-1}
&\int_{I_i}(Pz)v\mr{d}x=\int_{I_i}zv\mr{d}x,\quad \forall v\in \mathbb{P}^{k}(I_{i}),
\end{align}
for any $I_{i}=(x_{i-1}, x_{i}), i=1, 2,\cdots, N$. This is also defined for $J_{j}=(y_{j-1}, y_{j}), j=1, 2, \cdots, N$.

\subsection{Projection errors}
Assume that $\Pi_{2}$ is the local $L^{2}$-projection operator and $h = diam(K)$ is the diameter of $K$. Then we have the following approximation property, see \citep[Theorem 1]{Cro18jTho2:1987--motified} and \citep[Lemma 1]{Che1Pi2Xu3:2019--motified}:
\begin{equation}\label{P-111}
\Vert w- \Pi_{l} w\Vert_{K} + h^{\frac{1}{2}}\Vert w-\Pi_{l} w\Vert_{\partial K} \le Ch^{k+1}|w|_{H^{k+1}(K)},\quad K\in\mathcal{T}_{h}
\end{equation}
for all $w\in H^{k+1}(K)$. Similarly, it is noted that this estimate also holds for $\bm{\Pi_{1}}\bm{w}$.
For the operator $P$, according to the projection results \citep[Lemma 5.1]{Che1:2021--O}, we have
\begin{align*}
&\Vert z-Pz \Vert_{L^{m}(I_{i})}\le C h_{i,x}^{k+1}\Vert z^{(k+1)} \Vert_{L^{m}(I_{i})}, \quad m=2,\infty \label{eq:interpolation-theory-3}
\end{align*}
for all $z\in H^{k+1}(I_{i})$, where $I_{i}=(x_{i-1},x_{i})$ and $h_{i,x}$​​ is the length of $I_{i}$​​.  These results also apply to $J_{j}, j=1, 2, \cdots, N$.

Set $\bm{e}:=(\bm{e_{q}}, e_{u}, \hat{e}_{u}):=(\bm{q}-\bm{q}_{h}, u-u_{h}, u-\hat{u}_{h})$, which can also write as
\begin{equation}\label{PPP-1}
\bm{e}=\bm{w}-\bm{w}_{h}=(\bm{w}-\Pi\bm{w})-(\bm{w}_{h}-\Pi\bm{w})=\bm{\eta}-\bm{\xi},
\end{equation}
where
\begin{equation*}
\begin{aligned}
&\bm{w}=(\bm{q}|_{\Omega}, u|_{\Omega}, u|_{\mathcal{E}_{h}}),\\
&\bm{w}_{h}=(\bm{q}_{h}, u_{h}, \hat{u}_{h}),\\
&\bm{\eta}=(\bm{\eta_{q}}, \eta_{u}, \hat{\eta}_{u})=(\bm{q}-\bm{\Pi_{1}q}, u-\Pi_{2} u, u-Pu),\\
&\bm{\xi}=(\bm{\xi_{q}}, \xi_{u}, \hat{\xi}_{u})=(\bm{q}_{h}-\bm{\Pi_{1}q}, u_{h}-\Pi_{2}u, \hat{u}_{h}-Pu),
\end{aligned}
\end{equation*}
and here we define $\Pi\bm{w}=(\bm{\Pi_{1}q}, \Pi_{2}u, Pu)$.  And on the basis of that, it is straightforward to obtain the following theorem.

\begin{theorem}\label{error}
Let Assumption \ref{ass:S-1} and $\sigma\ge k+1$ hold. Then on the Shishkin mesh, there is
\begin{equation*}
|||\bm{\eta}|||\le CN^{-(k+\frac{1}{2})}(\ln N)^{k+1},
\end{equation*}
where $\bm{\eta}=(\bm{q}-\bm{\Pi_{1}q}, u-\Pi_{2} u, u-Pu)$, $(\bm{q}, u)$ is the exact solution of the problem \eqref{eq:S-1}, and $(\bm{\Pi_{1}q}, \Pi_{2}u, Pu)$ is the  projection of the exact solution of \eqref{eq:S-1}.  
\end{theorem}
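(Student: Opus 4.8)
The plan is to estimate the three contributions to the energy norm $|||\bm{\eta}|||$ separately, namely $\varepsilon^{-1/2}\|\bm{\eta_q}\|_{\mathcal{T}_h}$, $\|(c-\tfrac12\nabla\cdot\bm{\beta})^{1/2}\eta_u\|_{\mathcal{T}_h}$, and $\|(\tau-\tfrac12\bm{\beta}\cdot\bm{n})^{1/2}(\eta_u-\hat\eta_u)\|_{\partial\mathcal{T}_h}$, and in each case to sum the local estimate \eqref{P-111} (and its edge analogue, together with the one-dimensional bound for $P$) over the four subregions $\Omega_s$, $\Omega_x$, $\Omega_y$, $\Omega_{xy}$ of the Shishkin mesh. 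First I would split $u=S+E_1+E_2+E_3$ using the decomposition in \eqref{eq:decomposition}. On $\Omega_s$ the mesh size in each direction is $\mathcal{O}(N^{-1})$ and all layer components are exponentially small (of size $\mathcal{O}(N^{-\sigma})\le\mathcal{O}(N^{-(k+1)})$ because $\sigma\ge k+1$), so the smooth part $S$ dominates and \eqref{P-111} gives an $\mathcal{O}(N^{-(k+1)})$ bound there. On the layer subregions the mesh size in the refined direction is $h\sim N^{-1}\varepsilon\ln N$, so when we apply \eqref{P-111} to a layer component we gain a factor $h^{k+1}\sim (N^{-1}\varepsilon\ln N)^{k+1}$ from the projection error, while the high-order derivative $|E_\ell|_{H^{k+1}(K)}$ contributes the compensating negative power $\varepsilon^{-(k+1)}$ together with the exponential factor; the $\varepsilon$ powers cancel and, after bounding $\|e^{-\beta_1(1-x)/\varepsilon}\|$ type integrals on the mesh rectangles, one is left with $\mathcal{O}(N^{-(k+1)}(\ln N)^{k+1})$.

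The one genuinely delicate point is the $\varepsilon^{-1/2}\|\bm{\eta_q}\|_{\mathcal{T}_h}$ term, because $\bm{q}=-\varepsilon\nabla u$ means the layer derivatives of $\bm{q}$ carry one fewer negative power of $\varepsilon$ than those of $u$, so on a layer rectangle $|\bm{q}|_{H^{k+1}(K)}$ scales like $\varepsilon\cdot\varepsilon^{-(k+1)}=\varepsilon^{-k}$ in the layer direction, and the extra prefactor $\varepsilon^{-1/2}$ must be absorbed by the measure of the layer region. I expect this is where one uses that the layer subregions have width $\mathcal{O}(\varepsilon\ln N)$: the $L^2$ norm over, say, $\Omega_x$ picks up an extra factor $\sqrt{\varepsilon\ln N}$ from integrating the (bounded-in-the-other-variable) quantities, which beats the $\varepsilon^{-1/2}$ and leaves only logarithmic loss. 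Carrying this bookkeeping through all four regions and both the $E_1$, $E_2$ (single) layers and the $E_3$ (corner) layer is the bulk of the work, but it is essentially routine once the scaling is set up; I would organize it as a short lemma listing $h_{i,x}$, $h_{j,y}$ on each region, then a region-by-region table.

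The edge term is handled the same way: the trace version of \eqref{P-111} costs a factor $h^{-1/2}$ relative to the volume estimate, but on the Shishkin mesh the offending edges are precisely the short ones (length $h\sim N^{-1}\varepsilon\ln N$ in the layer direction), and the coefficient $\tau-\tfrac12\bm{\beta}\cdot\bm{n}$ is bounded above by the assumption $\tau^{\max}\le C_0$, so the same cancellation of $\varepsilon$ powers occurs; on edges lying in the smooth region the factor $h^{-1/2}\sim N^{1/2}$ is responsible for the loss of the half power, which is exactly why the final rate is $k+\tfrac12$ rather than $k+1$. Finally, combining the three estimates and taking the maximum over the four subregions yields $|||\bm{\eta}|||\le CN^{-(k+1/2)}(\ln N)^{k+1}$, which is the claim. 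The main obstacle, to repeat, is controlling the $\varepsilon^{-1/2}$ prefactor on $\bm{\eta_q}$ in the layers; everything else is a careful but standard Shishkin-mesh interpolation-error computation.
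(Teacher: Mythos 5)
Your proposal is correct and follows essentially the same route as the paper: split $|||\bm{\eta}|||^2$ into its volume flux term, volume scalar term, and edge term, then apply the solution decomposition \eqref{eq:decomposition} together with the local $L^{2}$-projection estimates region by region, with the $\varepsilon$-powers cancelling in $\varepsilon^{-1/2}\Vert\bm{\eta_q}\Vert$ via the $\mathcal{O}(\varepsilon\ln N)$ layer width and the trace/edge contribution (the paper's $\mathcal{Y}_3\le C\,N\Vert\eta_u\Vert^2_{L^\infty}+\Vert\hat\eta_u\Vert^2_{\partial\mathcal{T}_h}$) accounting for the half-power loss. Your bookkeeping is, if anything, more detailed than the paper's three-line proof; the only slight imprecision is attributing the $N^{1/2}$ loss to smooth-region edges alone, whereas the layer-region edges contribute at the same order (and carry the $(\ln N)^{k+1}$ factor), but this does not affect the final bound.
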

\begin{proof}
According to the definition of the norm \eqref{eq:SS-1},
\begin{equation*}
\begin{aligned}
|||\bm{\eta}|||^{2}:&=\varepsilon^{-1}\Vert\bm{\eta_{q}}\Vert_{\mathcal{T}_{h}}^{2}+\Vert (c-\frac{1}{2}\nabla\cdot\bm{\beta})^{\frac{1}{2}}\eta_{u}\Vert^{2}_{\mathcal{T}_{h}}+\Vert(\tau-\frac{1}{2}\bm{\beta}\cdot \bm{n})^{\frac{1}{2}}(\eta_{u}-\hat{\eta}_{u})\Vert^{2}_{\partial\mathcal{T}_{h}}\\
&:=\mathcal{Y}_{1}+\mathcal{Y}_{2}+\mathcal{Y}_{3}.
\end{aligned}
\end{equation*}

First, for $\mathcal{Y}_{1}$, using Theorem \ref{eq:decomposition} and the interpolation error \eqref{P-111}, we have
\begin{equation*}
\mathcal{Y}_{1}\le C\varepsilon^{-1}\varepsilon N^{-2(k+1)}(\ln N)^{2k+3}\le CN^{-2(k+1)}(\ln N)^{2k+3}.
\end{equation*}

For $\mathcal{Y}_{2}$, by using the same method, 
$$\mathcal{Y}_{2}\le C\Vert\eta_{u}\Vert^{2}_{\mathcal{T}_{h}}\le C\left(N^{-2(k+1)}+\varepsilon N^{-2(k+1)}(\ln N)^{2k+3}\right).$$

For $\mathcal{Y}_{3}$, it's easy to derive
\begin{equation*}
\begin{aligned}
\mathcal{Y}_{3}\le C\left(N\Vert\eta_{u}\Vert^{2}_{L^{\infty}(\Omega)}+\Vert\hat{\eta}_{u}\Vert^{2}_{\partial\mathcal{T}_{h}}\right)\le CN^{-(2k+1)}(\ln N)^{2(k+1)}.
\end{aligned}
\end{equation*}
Thus, this proof is completed.
\end{proof}

In order to obtain the parameter uniform supercloseness analysis of the HDG method on a Shishkin mesh, we introduce the following new error handling technique \citep[Lemma 3.3]{Che1Jia2Sty3:2023--S}.

\begin{lemma}\label{L-1}
There is a constant $C$ that is independent of the perturbation parameter $\varepsilon$ and the mesh parameter $N$ such that
\begin{equation*}
\begin{aligned}
&\Vert(\xi_{u})_{x}\Vert_{\Omega_{x}}\le Ch_{i, x}^{-\frac{1}{2}}|||\bm{\xi}|||,\\
&\Vert(\xi_{u})_{y}\Vert_{\Omega_{y}}\le Ch_{j, y}^{-\frac{1}{2}}|||\bm{\xi}|||,
\end{aligned}
\end{equation*}
where $\xi_{u}:=u_{h}-\Pi_{2}u$, $\Omega_{x}=(1-\tau_{x}, 1)\times(0, 1-\tau_{y})$ and $\Omega_{y}=(0, 1-\tau_{x})\times(1-\tau_{y}, 1)$. Moreover, $h_{i, x}=x_{i}-x_{i-1}$ and $h_{j, y}=y_{j}-y_{j-1}$ denote the length of $I_{i}$ and $J_{j}$, respectively.
\end{lemma}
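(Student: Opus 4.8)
The plan is to prove the first inequality $\Vert(\xi_u)_x\Vert_{\Omega_x}\le Ch_{i,x}^{-1/2}|||\bm{\xi}|||$; the second one then follows by the symmetric argument in the $y$-direction. The key point is that $\xi_u|_K$ is a polynomial in $\mathcal{Q}^k(K)$ on each mesh rectangle $K=K_{ij}\subset\Omega_x$, so an inverse estimate is available, but a crude application of it would produce the factor $h_{i,x}^{-1}$ rather than $h_{i,x}^{-1/2}$. To gain the extra half power, I would not bound $(\xi_u)_x$ directly by a local inverse inequality applied to $\Vert\xi_u\Vert_K$ alone; instead I would use the trace/jump structure already encoded in the energy norm. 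The idea (following the technique of \citep[Lemma 3.3]{Che1Jia2Sty3:2023--S}) is that on a rectangle the derivative $(\xi_u)_x$ can be controlled by the $L^2$-norm of $\xi_u$ on $K$ together with the boundary contributions $\Vert\xi_u-\hat\xi_u\Vert_{\partial K}$ on the two vertical edges of $K$, with only a $h_{i,x}^{-1/2}$-type weight on the latter. Concretely, for a one-dimensional polynomial $p$ of degree $\le k$ on an interval $I_i$ one has $\Vert p'\Vert_{I_i}\le C\big(h_{i,x}^{-1}\Vert p\Vert_{I_i} + h_{i,x}^{-1/2}(|p(x_{i-1})|+|p(x_i)|)\big)$ is \emph{not} the right split either; rather the sharp statement uses the difference $p-\mu$ at the endpoints for a suitable constant/trace $\mu$, which is exactly what the energy norm measures through $(\tau-\tfrac12\bm\beta\cdot\bm n)^{1/2}(\xi_u-\hat\xi_u)$ on $\partial\mathcal{T}_h$.

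The steps, in order, would be: (i) Fix $K=K_{ij}\subset\Omega_x$ and reduce to a one-dimensional estimate in $x$ by tensor structure, integrating the $1$D bound over $J_j$. (ii) Establish the $1$D polynomial inequality: for $p\in\mathbb{P}^k(I_i)$ and any number $c$, $\Vert p'\Vert_{I_i}^2\le C h_{i,x}^{-1}\big(h_{i,x}^{-1}\Vert p\Vert_{I_i}^2 + (p(x_{i-1})-c)^2+(p(x_i)-c)^2\big)$; this is proved on the reference interval by equivalence of norms on the finite-dimensional space and then scaled. (iii) Choose $c=\hat\xi_u$ on the appropriate edge (a constant-in-$x$ trace on each vertical edge of $K$), so that the endpoint terms become the jump terms $(\xi_u-\hat\xi_u)^2$ evaluated on the vertical edges of $K$. (iv) Sum over all $K=K_{ij}$ with $x_i>1-\tau_x$ (i.e. over $\Omega_x$), i.e. over $i=N/2+1,\dots,N$ and $j=1,\dots,N$, using that in $\Omega_x$ all the horizontal mesh widths $h_{i,x}$ are equal to $2\tau_x/N$, hence can be pulled out of the sum. (v) Recognize $\sum_K h_{i,x}^{-1}\Vert\xi_u\Vert_K^2\le C h_{i,x}^{-1}\cdot C h_{i,x}^{-1}$ is wrong dimensionally — instead, after pulling out $h_{i,x}^{-1}$ one is left with $h_{i,x}^{-1}\Vert\xi_u\Vert_{\Omega_x}^2 + \Vert\xi_u-\hat\xi_u\Vert_{\partial\mathcal{T}_h\cap\overline{\Omega_x}}^2$; bound $\Vert\xi_u\Vert_{\Omega_x}^2\le C h_{i,x}\cdot$(something)? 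No: bound $h_{i,x}^{-1}\Vert\xi_u\Vert_{\Omega_x}^2$ by noting $\Vert\xi_u\Vert_{\Omega_x}^2\le C h_{i,x}\,|||\bm\xi|||^2$ is false in general; the honest route is that the term $\Vert\xi_u\Vert_{\Omega_x}^2$ is already dominated by $|||\bm\xi|||^2$ up to a constant (via coefficient positivity), and $\Vert(\tau-\tfrac12\bm\beta\cdot\bm n)^{1/2}(\xi_u-\hat\xi_u)\Vert^2_{\partial\mathcal{T}_h}\le|||\bm\xi|||^2$ directly, with property~(2) of $\tau$ giving $\tau-\tfrac12\bm\beta\cdot\bm n\ge C_1$ so that the jump norm controls $\Vert\xi_u-\hat\xi_u\Vert_{\partial\mathcal{T}_h}^2$. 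Assembling, $\Vert(\xi_u)_x\Vert_{\Omega_x}^2\le C h_{i,x}^{-1}|||\bm\xi|||^2$, which is the claim.

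The main obstacle is step (ii)–(iii): getting the \emph{half} power $h_{i,x}^{-1/2}$ rather than $h_{i,x}^{-1}$. The naive inverse inequality gives $\Vert p'\Vert_{I_i}\le Ch_{i,x}^{-1}\Vert p\Vert_{I_i}$, and summed over $\Omega_x$ this yields only $h_{i,x}^{-1}|||\bm\xi|||$ with the wrong constant, or more precisely $h_{i,x}^{-1}\Vert\xi_u\Vert_{\Omega_x}$, and since $\Vert\xi_u\Vert_{\Omega_x}\le|||\bm\xi|||$ one would get $h_{i,x}^{-1}|||\bm\xi|||$ — a full power worse. The resolution is the decomposition $\xi_u = (\xi_u-\hat\xi_u) + \hat\xi_u$ on edges: the polynomial $p'$ on $I_i$ only "sees" $\xi_u$ up to additive constants (its derivative kills constants), so one may subtract the edge trace and bound $\Vert p'\Vert$ by the $L^2$ mass of $\xi_u-\hat\xi_u$ restricted to the two vertical edges, which carries the favorable weight. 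Making this rigorous requires the sharp scaled estimate $\Vert p'\Vert_{I_i}^2\le C\big(h_{i,x}^{-2}\Vert p - \bar p\Vert_{I_i}^2\big)$ combined with a trace inequality relating the interior oscillation of $p$ to its endpoint jumps; I would verify it on the reference interval $\hat I=(0,1)$, where everything is a finite-dimensional linear-algebra fact, and then scale. Once that one-dimensional lemma is in hand, the summation over $\Omega_x$ and the identification with the energy norm are routine, using Assumption~\ref{ass:S-1}, $\sigma\ge k+1$, and properties (1)–(2) of the stabilization function $\tau$ to keep all constants $\varepsilon$- and $N$-independent.
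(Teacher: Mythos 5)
There is a genuine gap: your argument never uses the HDG scheme itself, and without it the stated bound is simply not a theorem about arbitrary discrete functions. All you invoke on the right-hand side is the energy norm \eqref{eq:SS-1}, which controls $\varepsilon^{-1/2}\Vert\bm{\xi_q}\Vert$, $\Vert\xi_u\Vert$ and the edge jumps $\Vert(\tau-\tfrac12\bm{\beta}\cdot\bm{n})^{1/2}(\xi_u-\hat{\xi}_u)\Vert_{\partial\mathcal{T}_h}$, together with polynomial inverse/trace inequalities. That information cannot yield $\Vert(\xi_u)_x\Vert_{\Omega_x}\le Ch_{i,x}^{-1/2}|||\bm{\xi}|||$: take $\bm{\xi_q}=0$, $\hat{\xi}_u=0$, and $\xi_u|_{K_{ij}}=(x-x_{i-1})(x_i-x)(y-y_{j-1})(y_j-y)$ suitably normalized, which vanishes on $\partial K_{ij}$, so $|||\bm{\xi}|||\simeq\Vert\xi_u\Vert$ while $\Vert(\xi_u)_x\Vert\simeq h_{i,x}^{-1}\Vert\xi_u\Vert$; the best exponent obtainable is the full inverse power $h_{i,x}^{-1}$, exactly the difficulty you acknowledge in steps (ii)--(v). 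Your proposed rescue --- a one-dimensional inequality bounding $\Vert p'\Vert_{I_i}$ by the interior $L^2$ mass with weight $h_{i,x}^{-1}$ plus endpoint differences $p(x_{i\pm})-c$ --- fails on the same bubble: the endpoint terms vanish, and no trace inequality can control the interior oscillation of a polynomial by its endpoint jumps alone. So the half power cannot be recovered by norm-equivalence and scaling arguments; the lemma is a statement about the particular $\bm{\xi}$ produced by the error equation, not about all members of the discrete space.

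The paper's proof supplies precisely the missing ingredient. Setting $w=\mu=0$ in \eqref{eq:SD} and using Galerkin orthogonality together with the orthogonality of the local $L^2$ projections, one obtains on each $K_{ij}\subset\Omega_x$ the local flux identity
\begin{equation*}
-\varepsilon^{-1}(\xi_{q^{1}}, r^{1})_{K_{ij}}-((\xi_{u})_{x}, r^{1})_{K_{ij}}+\langle(\xi_{u}-\xi_{\hat{u}})(x_{i}^{-},y), r^{1}(x_{i}^{-},y)\rangle_{J_{j}}-\langle(\xi_{u}-\xi_{\hat{u}})(x_{i-1}^{+},y), r^{1}(x_{i-1}^{+},y)\rangle_{J_{j}}=0,
\end{equation*}
and then chooses the weighted test function $r^{1}=\frac{x-x_{i-1}}{h_{i,x}}(\xi_u)_x$, which kills the left-edge term. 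The derivative $(\xi_u)_x$ is thus paid for not by an inverse estimate on $\xi_u$ but by (a) the flux term $\varepsilon^{-1}\Vert\xi_{q^1}\Vert_{K_{ij}}\Vert(\xi_u)_x\Vert_{K_{ij}}$, which after summation gives $\varepsilon^{-2}\Vert\xi_{q^1}\Vert^2_{\Omega_x}\le\varepsilon^{-1}|||\bm{\xi}|||^2\le Ch_{i,x}^{-1}|||\bm{\xi}|||^2$ because in $\Omega_x$ one has $h_{i,x}=2\sigma\varepsilon\ln N/(\beta_1 N)$, hence $\varepsilon^{-1}\le Ch_{i,x}^{-1}$; and (b) the single right-edge jump, which a trace inverse estimate converts into $h_{i,x}^{-1/2}\Vert(\xi_u-\xi_{\hat u})(x_i^-,\cdot)\Vert_{J_j}\Vert(\xi_u)_x\Vert_{K_{ij}}$ and which the jump part of the energy norm absorbs with the factor $h_{i,x}^{-1}$ after summation, using property (2) of $\tau$. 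A norm-equivalence step on the reference element shows the weighted left-hand side still dominates $\Vert(\xi_u)_x\Vert^2_{K_{ij}}$. If you incorporate this use of the local error equation with that specific test function, the rest of your outline (reduction to $\Omega_x$, uniform $h_{i,x}$ there, identification with the energy norm) goes through; without it, the approach cannot succeed.
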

\begin{proof}
Below, we only prove the estimates hold on $\Omega_{x}$, and the similar estimate on $\Omega_{y}$ can be obtained using the same method. Set $w=\mu=0$ in the test function $(\bm{r}, w, \mu)$ in bilinear form \eqref{eq:SD}, then write the local form as
$$\varepsilon^{-1}(\bm{q}_{h}, \bm{r})_{K_{ij}}-(u_{h}, \nabla\cdot\bm{r})_{K_{ij}}+<\hat{u}_{h}, \bm{r}\cdot \bm{n}>_{\partial K_{ij}}=0$$
Acoording to $\bm{r}=(r^{1}, r^{2})$, $\bm{q}_{h}=(q^{1}_{h}, q^{2}_{h})$ and $\bm{n}=(n_{x}, n_{y})$, suppose that $r^{2}=0$, one has
\begin{equation*}
\begin{aligned}
&\varepsilon^{-1}(q^{1}_{h}, r^{1})_{K_{ij}}-(u_{h}, r^{1}_{x})_{K_{ij}}+\left<\hat{u}_{h}(x_{i}^{-}, y), r^{1}(x_{i}^{-}, y)\right>_{J_{j}}\\
&-\left<\hat{u}_{h}(x_{i-1}^{+}, y), r^{1}(x_{i-1}^{+}, y)\right>_{J_{j}}=0.
\end{aligned}
\end{equation*}
Here $u$ and $q^{1}=-\varepsilon u_{x}$ also satisfy the above formula, then one has
\begin{equation*}
\begin{aligned}
&\varepsilon^{-1}(q^{1}-q^{1}_{h}, r^{1})_{K_{ij}}-(u-u_{h},  r^{1}_{x})_{K_{ij}}\\
&+\left<(u-\hat{u}_{h})(x_{i}^{-}, y), r^{1}(x_{i}^{-}, y)\right>_{J_{j}}-\left<(u-\hat{u}_{h})(x_{i-1}^{+}, y), r^{1}(x_{i-1}^{+}, y)\right>_{J_{j}}=0.
\end{aligned}
\end{equation*}
Using the error equation \eqref{PPP-1} we have
\begin{equation*}
\begin{aligned}
&\varepsilon^{-1}(\eta_{q^{1}}, r^{1})_{K_{ij}}-\varepsilon^{-1}(\xi_{q^{1}}, r^{1})_{K_{ij}}-(\eta_{u}, r^{1}_{x})_{K_{ij}}+(\xi_{u}, r^{1}_{x})_{K_{ij}}\\&+<\eta_{\hat{u}}(x_{i}^{-}, y), r^{1}(x_{i}^{-}, y)>_{J_{j}}-<\xi_{\hat{u}}(x_{i}^{-}, y), r^{1}(x_{i}^{-}, y)>_{J_{j}}\\
&-<\eta_{\hat{u}}(x_{i-1}^{+}, y), r^{1}(x_{i-1}^{+}, y)>_{J_{j}}+<\xi_{\hat{u}}(x_{i-1}^{+}, y), r^{1}(x_{i-1}^{+}, y)>_{J_{j}}=0
\end{aligned}
\end{equation*}
Due to the use of local $L^{2}$ projection from $L^{2}(\mathcal{E}_{h})$ onto $M_{h}$ for $\hat{u}$ on $\Omega_{x}$, and the local $L^{2}$ projections for $u$ and $\bm{q}$,
\begin{equation*}
\begin{aligned}
&-\varepsilon^{-1}(\xi_{q^{1}}, r^{1})_{K_{ij}}+(\xi_{u}, r^{1}_{x})_{K_{ij}}-<\xi_{\hat{u}}(x_{i}^{-}, y), r^{1}(x_{i}^{-}, y)>_{J_{j}}\\
&+<\xi_{\hat{u}}(x_{i-1}^{+}, y), r^{1}(x_{i-1}^{+}, y)>_{J_{j}}=0
\end{aligned}
\end{equation*}
By integrating by parts, it's easy to obtain
\begin{equation}\label{eq:BB-1}
\begin{aligned}
&-\varepsilon^{-1}(\xi_{q^{1}}, r^{1})_{K_{ij}}-((\xi_{u})_{x}, r^{1})_{K_{ij}}+<(\xi_{u}-\xi_{\hat{u}})(x_{i}^{-},y), r^{1}(x_{i}^{-},y)>_{J_{j}}\\
&-<(\xi_{u}-\xi_{\hat{u}})(x_{i-1}^{+},y), r^{1}(x_{i-1}^{+},y)>_{J_{j}}=0
\end{aligned}
\end{equation}
Take $r^{1}|_{K_{ij}}=\frac{x-x_{i-1}}{h_{i, x}}(\xi_{u})_{x}\in \mathcal{Q}^{k}(K_{ij})$, then $r^{1}(x_{i-1}^{+}, y)=0$, $r^{1}(x_{i}^{-}, y)=(\xi_{u})_{x}(x_{i}^{-}, y)$, and
\begin{equation*}
\begin{aligned}
&\left((\xi_{u})_{x}, \frac{x-x_{i-1}}{h_{i, x}}(\xi_{u})_{x}\right)_{K_{ij}}\\&\le C\varepsilon^{-1}\Vert\xi_{q^{1}}\Vert_{K_{ij}}\Vert r^{1}\Vert_{K_{ij}}+C\Vert (\xi_{u}-\xi_{\hat{u}})(x_{i}^{-}, y)\Vert_{J_{j}}\Vert(\xi_{u})_{x}(x_{i}^{-}, y)\Vert_{J_{j}}\\
&\le C\varepsilon^{-1}\Vert\xi_{q^{1}}\Vert_{K_{ij}}\Vert(\xi_{u})_{x}\Vert_{K_{ij}}+Ch_{i, x}^{-\frac{1}{2}}\Vert (\xi_{u}-\xi_{\hat{u}})(x_{i}^{-}, y)\Vert_{J_{j}}\Vert(\xi_{u})_{x}\Vert_{K_{ij}}.
\end{aligned}
\end{equation*}
Here $\Vert(\xi_{u})_{x}(x_{i}^{-}, y)\Vert^{2}_{J_{j}}\le Ch_{j, y}\Vert(\xi_{u})_{x}\Vert^{2}_{L^{\infty}(K_{ij})}\le Ch_{i, x}^{-1}\Vert(\xi_{u})_{x}\Vert^{2}_{K_{ij}}.$
Now we shall prove the following inequality,
$$\Vert(\xi_{u})_{x}\Vert^{2}_{K_{ij}}\le C\left\Vert \left(\frac{x-x_{i-1}}{h_{i, x}}\right)^{\frac{1}{2}}(\xi_{u})_{x}\right\Vert^{2}_{K_{ij}}.$$
From $x_{1}=\frac{2}{h_{i, x}}x+1-\frac{2}{h_{i, x}}x_{i}$, $y_{1}=\frac{2}{h_{j, y}}y+1-\frac{2}{h_{j, y}}y_{j}$ there is
\begin{equation*}
\begin{aligned}
&\int_{K_{ij}}\frac{x-x_{i-1}}{h_{i, x}}(\xi_{u})^{2}_{x}\mr{d}x\mr{d}y\\
&=\int_{-1}^{1}\int_{-1}^{1}\frac{\frac{h_{i, x}}{2}(x_{1}-1+\frac{2}{h_{i, x}}x_{i})-x_{i-1}}{h_{i, x}}\left(\frac{\partial\xi_{u}}{\partial x_{1}}\frac{\partial x_{1}}{\partial x}\right)^{2}\frac{\mr{d}x}{\mr{d}x_{1}}\mr{d}x_{1}\frac{\mr{d}y}{\mr{d}y_{1}}\mr{d}y_{1}\\
&=\int_{-1}^{1}\int_{-1}^{1}\frac{\frac{h_{i, x}}{2}(x_{1}+1)}{h_{i, x}}\left(\frac{\partial\xi_{u}}{\partial x_{1}}\right)^{2}\frac{4}{h_{i, x}^{2}}\frac{h_{i, x}}{2}\frac{h_{j, y}}{2}\mr{d}x_{1}\mr{d}y_{1}\\
&=\int_{-1}^{1}\int_{-1}^{1}\frac{1}{2}(x_{1}+1)(\xi_{u})^{2}_{x_{1}}\frac{h_{j, y}}{h_{i, x}}\mr{d}x_{1}\mr{d}y_{1}\\
&=\frac{1}{2}\frac{h_{j, y}}{h_{i, x}}\int_{-1}^{1}\int_{-1}^{1}(x_{1}+1)(\xi_{u})^{2}_{x_{1}}\mr{d}x_{1}\mr{d}y_{1}.
\end{aligned}
\end{equation*}
Similarly,
\begin{equation*}
\begin{aligned}
\Vert(\xi_{u})_{x}\Vert^{2}_{K_{ij}}&=\int_{K_{ij}}(\xi_{u})_{x}^{2}\mr{d}x\mr{d}y=\int_{-1}^{1}\int_{-1}^{1}(\xi_{u})_{\hat{x}}^{2}\frac{4}{h^{2}_{i, x}}\frac{h_{i, x}}{2}\frac{h_{j, y}}{2}\mr{d}\hat{x}\mr{d}\hat{y}\\
&=\frac{h_{j, y}}{h_{i, x}}\int_{-1}^{1}\int_{-1}^{1}(\xi_{u})^{2}_{\hat{x}}\mr{d}\hat{x}\mr{d}\hat{y}.
\end{aligned}
\end{equation*}
By using the equivalence of the norm on the reference element,
we have

\begin{equation*}
\begin{aligned}
\Vert (\xi_{u})_{x}\Vert^{2}_{K_{ij}}&\le C\left\Vert\left(\frac{x-x_{i-1}}{h_{i, x}}\right)^{\frac{1}{2}}(\xi_{u})_{x}\right\Vert^{2}_{K_{ij}}\\
&\le C\varepsilon^{-1}\Vert \xi_{q^{1}}\Vert_{K_{ij}}||(\xi_{u})_{x}\Vert_{K_{ij}}+Ch_{i, x}^{-\frac{1}{2}}\Vert (\xi_{u}-\xi_{\hat{u}})(x_{i}^{-}, y)\Vert_{J_{j}}\Vert(\xi_{u})_{x}\Vert_{K_{ij}}.
\end{aligned}
\end{equation*}
And then it's easy to obtain
\begin{equation*}
\begin{aligned}
&\Vert (\xi_{u})_{x}\Vert^{2}_{\Omega_{x}}=\sum_{i=N/2+1}^{N}\sum_{j=1}^{N/2}\Vert (\xi_{u})_{x}\Vert^{2}_{K_{ij}}\\
&\le C\varepsilon^{-2}\sum_{i=N/2+1}^{N}\sum_{j=1}^{N/2}\Vert \xi_{q^{1}}\Vert^{2}_{K_{ij}}+Ch_{i, x}^{-1}\sum_{i=N/2+1}^{N}\sum_{j=1}^{N/2}\Vert (\xi_{u}-\xi_{\hat{u}})(x_{i}^{-}, y)\Vert^{2}_{J_{j}}\\
&\le C\varepsilon^{-1}|||\bm{\xi}|||^{2}+C(\tau-\frac{1}{2}\bm{\beta}\cdot \bm{n})^{-\frac{1}{2}}h_{i, x}^{-1}\sum_{i=N/2+1}^{N}\sum_{j=1}^{N/2}\Vert (\tau-\frac{1}{2}\bm{\beta}\cdot \bm{n})^{\frac{1}{2}}(\xi_{u}-\xi_{\hat{u}})(x_{i}^{-}, y)\Vert^{2}_{J_{j}}\\
&\le C\varepsilon^{-1}|||\bm{\xi}|||^{2}+Ch_{i, x}^{-1}|||\bm{\xi}|||^{2}\\
&\le Ch_{i, x}^{-1}|||\bm{\xi}|||^{2}.
\end{aligned}
\end{equation*}
\end{proof}
\vspace{-0.5cm}
\section{Supercloseness}
Through \eqref{eq:coercity} and Lemma \ref{Galerkin orthogonality property}, there is
\begin{equation}\label{eq:uniform-convergence-1}
\begin{split}
|||\bm{\xi}|||^{2} &\le B(\bm{\xi}, \bm{\xi})=B((\bm{q}_{h}-\bm{\Pi_{1}q}, u_{h}-\Pi_{2} u, \hat{u}_{h}-Pu), \bm{\xi})\\
&=B((\bm{q}-\bm{\Pi_{1}q}, u-\Pi_{2} u, \hat{u}-Pu), \bm{\xi})\\ &=B(\bm{\eta}, \bm{\xi})\\
&=\varepsilon^{-1}(\bm{\eta_{q}}, \bm{\xi_{q}})_{\mathcal{T}_{h}}-(\eta_{u}, \nabla\cdot\bm{\xi_{q}})_{\mathcal{T}_{h}}+<\hat{\eta}_{u}, \bm{\xi_{q}}\cdot \bm{n}>_{\partial\mathcal{T}_{h}}\\
&-(\bm{\eta_{q}}, \nabla\xi_{u})_{\mathcal{T}_{h}}-(\bm{\beta}\eta_{u}, \nabla\xi_{u})_{\mathcal{T}_{h}}+((c-\nabla\cdot\bm{\beta})\eta_{u}, \xi_{u})_{\mathcal{T}_{h}}
\\
&+<\bm{\eta_{q}}\cdot \bm{n}, \xi_{u}-\hat{\xi}_{u}>_{\partial\mathcal{T}_{h}}
+<\bm{\beta}\cdot \bm{n}\hat{\eta}_{u}, \xi_{u}-\hat{\xi}_{u}>_{\partial\mathcal{T}_{h}}\\
&+<\tau(\eta_{u}-\hat{\eta}_{u}), \xi_{u}-\hat{\xi}_{u}>_{\partial\mathcal{T}_{h}}\\
&=\varepsilon^{-1}(\bm{\eta_{q}}, \bm{\xi_{q}})_{\mathcal{T}_{h}}-(\eta_{u}, \nabla\cdot\bm{\xi_{q}})_{\mathcal{T}_{h}}-(\bm{\eta_{q}}, \nabla\xi_{u})_{\mathcal{T}_{h}}\\
&-(\bm{\beta}\eta_{u}, \nabla\xi_{u})_{\mathcal{T}_{h}}+((c-\nabla\cdot\bm{\beta})\eta_{u}, \xi_{u})_{\mathcal{T}_{h}}
+<\bm{\eta_{q}}\cdot \bm{n}, \xi_{u}-\hat{\xi}_{u}>_{\partial\mathcal{T}_{h}}\\
&+<\bm{\beta}\cdot \bm{n}\hat{\eta}_{u}, \xi_{u}-\hat{\xi}_{u}>_{\partial\mathcal{T}_{h}}+<\tau\eta_{u}, \xi_{u}-\hat{\xi}_{u}>_{\partial\mathcal{T}_{h}}.
\end{split}
\end{equation}
Here we take advantage of the orthogonality of the local $L^{2}$ projection $P$ and the definition that the stabilization function $\tau$ is taken as a constant.

Because we use local $L^{2}$ projection for $\bm{q}$ and $u$, \eqref{eq:uniform-convergence-1} can be simplified as
\begin{equation*}
\begin{aligned}
|||\bm{\xi}|||^{2} 
&={\large-(\bm{\beta}\eta_{u}, \nabla\xi_{u})_{\mathcal{T}_{h}}}+((c-\nabla\cdot\bm{\beta})\eta_{u}, \xi_{u})_{\mathcal{T}_{h}}
+<\bm{\eta_{q}}\cdot \bm{n}, \xi_{u}-\hat{\xi}_{u}>_{\partial\mathcal{T}_{h}}\\
&+<\bm{\beta}\cdot \bm{n}\hat{\eta}_{u}, \xi_{u}-\hat{\xi}_{u}>_{\partial\mathcal{T}_{h}}+<\tau\eta_{u}, \xi_{u}-\hat{\xi}_{u}>_{\partial\mathcal{T}_{h}}\\
& =:\mathcal{I}_{1}+\mathcal{I}_{2}+\mathcal{I}_{3}+\mathcal{I}_{4}+\mathcal{I}_{5}.
\end{aligned}
\end{equation*}

For $\mathcal{I}_{1}$, we divide the region $\Omega$ into $\Omega_{s}$, $\Omega_{xy}$, $\Omega_{x}$ and $\Omega_{y}$ for analysis. Using the Cauchy-Schwarz inequality and the inverse inequality \cite{cia1:2002-modified}, on $\Omega_{s}\cup\Omega_{xy}$
\begin{equation*}
\begin{aligned}
(\bm{\beta}\eta_{u}, \nabla(\xi_{u}))_{\mathcal{T}_{h}}&=((\bm{\beta}-\bm{P}_{0, h}\bm{\beta})\eta_{u}, \nabla(\xi_{u}))_{\mathcal{T}_{h}}+(\bm{P}_{0, h}\bm{\beta}\eta_{u}, \nabla(\xi_{u}))_{\mathcal{T}_{h}}\\
&=((\bm{\beta}-\bm{P}_{0, h}\bm{\beta})\eta_{u}, \nabla(\xi_{u}))_{\mathcal{T}_{h}}\\
&\le C\left(N^{-1}\Vert\eta_{u}\Vert_{\Omega_{s}}N+(\varepsilon N^{-1}\ln N)\Vert\eta_{u}\Vert_{\Omega_{xy}}(\varepsilon N^{-1}\ln N)^{-1}\right)|||\bm{\xi}|||\\
&\le C\left(N^{-(k+1)}+\varepsilon^{\frac{1}{2}}N^{-(k+1)}(\ln N)^{k+\frac{3}{2}}\right)|||\bm{\xi}|||,
\end{aligned}
\end{equation*}
where $\bm{P}_{0, h}$ is the piecewise-constant projection operator. Then on $\Omega_{x}$ and $\Omega_{y}$, we have
\begin{equation*}
\begin{aligned}
(\bm{\beta}\eta_{u},\nabla(\xi_{u}))_{\mathcal{T}_{h}}&=\sum_{K\in\mathcal{T}_{h}}\int_{K}\eta_{u}\bm{\beta}\cdot\nabla(\xi_{u})\mr{d}x\mr{d}y\\
&=\sum_{K\in\mathcal{T}_{h}}\int_{K}\eta_{u}(\beta_{1}(\xi_{u})_{x}+\beta_{2}(\xi_{u})_{y})\mr{d}x\mr{d}y\\
&=(\beta_{1}\eta_{u}, (\xi_{u})_{x})_{\mathcal{T}_{h}}+(\beta_{2}\eta_{u}, (\xi_{u})_{y})_{\mathcal{T}_{h}}.
\end{aligned}
\end{equation*}
Below we will only analyze the estimate on $\Omega_{x}$, which can be similarly obtained on $\Omega_{y}$. From the Cauchy-Schwarz inequality and  Lemma \ref{L-1},
\begin{equation*}
\begin{aligned}
(\beta_{1}\eta_{u}, (\xi_{u})_{x})_{\mathcal{T}_{h}}
&\le C\Vert\eta_{u}\Vert_{\Omega_{x}}\Vert(\xi_{u})_{x}\Vert_{\Omega_{x}}\\
&\le C\Vert\eta_{u}\Vert_{\Omega_{x}}(\varepsilon N^{-1}\ln N)^{-\frac{1}{2}}|||\bm{\xi}|||\\
&\le C\varepsilon^{\frac{1}{2}}N^{-(k+1)}(\ln N)^{k+\frac{3}{2}}(\varepsilon N^{-1}\ln N)^{-\frac{1}{2}}|||\bm{\xi}|||\\
&\le  CN^{-(k+\frac{1}{2})}(\ln N)^{k+1}|||\bm{\xi}|||,
\end{aligned}
\end{equation*}
and according to the Cauchy-Schwarz inequality and the inverse inequality \cite{cia1:2002-modified},
\begin{equation*}
\begin{aligned}
(\beta_{2}\eta_{u}, (\xi_{u})_{y})_{\mathcal{T}_{h}}&=((\beta_{2}-\bar{\beta_{2}})\eta_{u}, (\xi_{u})_{y})_{\mathcal{T}_{h}}+(\bar{\beta_{2}}\eta_{u}, (\xi_{u})_{y})_{\mathcal{T}_{h}}\\
&\le CN^{-1}\Vert\eta_{u}\Vert_{\Omega_{x}}\Vert(\xi_{u})_{y}\Vert_{\Omega_{x}}\\
&\le CN^{-1}\varepsilon^{\frac{1}{2}}N^{-(k+1)}(\ln N)^{k+\frac{3}{2}}N\Vert\xi_{u}\Vert_{\Omega_{x}}\\
&\le C\varepsilon^{\frac{1}{2}}N^{-(k+1)}(\ln N)^{k+\frac{3}{2}}|||\bm{\xi}|||.
\end{aligned}
\end{equation*}

For $\mathcal{I}_{2}$, from the Cauchy-Schwarz inequality, we have
\begin{equation*}
\begin{aligned}
((c-\nabla\cdot\bm{\beta})\eta_{u}, \xi_{u})_{\mathcal{T}_{h}}&\le C\Vert\eta_{u}\Vert_{\mathcal{T}_{h}}\Vert \xi_{u}\Vert_{\mathcal{T}_{h}}\\
&\le C\left(N^{-(k+1)}+\varepsilon^{\frac{1}{2}}N^{-(k+1)}(\ln N)^{k+\frac{3}{2}}\right)|||\bm{\xi}|||.
\end{aligned}
\end{equation*}

For $\mathcal{I}_{3}$, there is
\begin{equation*}
\begin{aligned}
<\bm{\eta_{q}}\cdot \bm{n}, \xi_{u}-\hat{\xi}_{u}>_{\partial\mathcal{T}_{h}}&\le 
C\left\Vert\left|\tau-\frac{1}{2}\bm{\beta}\cdot \bm{n}\right|^{-\frac{1}{2}}\bm{\eta_{q}}\right\Vert_{\partial\mathcal{T}_{h}}\left\Vert\left|\tau-\frac{1}{2}\bm{\beta}\cdot \bm{n}\right|^{\frac{1}{2}}(\xi_{u}-\hat{\xi}_{u})\right\Vert_{\partial\mathcal{T}_{h}}\\
&\le C\left\Vert\left|\tau-\frac{1}{2}\bm{\beta}\cdot \bm{n}\right|^{-\frac{1}{2}}\bm{\eta_{q}}\right\Vert_{\partial\mathcal{T}_{h}}|||\bm{\xi}|||\\
&\le C\Vert\bm{\eta_{q}}\Vert_{\partial\mathcal{T}_{h}}|||\bm{\xi}|||\\
&\le C\left(\sum_{K\in\partial\mathcal{T}_{h}}\int_{\partial K}(\bm{\eta_{q}}\cdot \bm{n})^{2}\mr{d}s\right)^{\frac{1}{2}}|||\bm{\xi}|||\\
&\le C\left(N^{2}N^{-1}\Vert\bm{\eta_{q}}\Vert^{2}_{L^{\infty}(\Omega)}\right)^{\frac{1}{2}}|||\bm{\xi}|||\\
&\le CN^{-(k+\frac{1}{2})}(\ln N)^{k+1}|||\bm{\xi}|||.
\end{aligned}
\end{equation*}

For $\mathcal{I}_{4}$, from the Cauchy-Schwarz inequality and the assumption on stabilization function $\tau$,
\begin{equation*}
\begin{aligned}
<\bm{\beta}\cdot \bm{n}\hat{\eta}_{u}, \xi_{u}-\hat{\xi}_{u}>_{\partial\mathcal{T}_{h}}&\le C\left\Vert|\bm{\beta}\cdot\bm{n}|^{\frac{1}{2}}\hat{\eta}_{u}\right\Vert_{\partial\mathcal{T}_{h}}\left\Vert|\bm{\beta}\cdot\bm{n}|^{\frac{1}{2}}(\xi_{u}-\hat{\xi}_{u})\right\Vert_{\partial\mathcal{T}_{h}}\\
&\le C\left\Vert|\bm{\beta}\cdot\bm{n}|^{\frac{1}{2}}\hat{\eta}_{u}\right\Vert_{\partial\mathcal{T}_{h}}\left\Vert\left|\tau-\frac{1}{2}\bm{\beta}\cdot \bm{n}\right|^{\frac{1}{2}}(\xi_{u}-\hat{\xi}_{u})\right\Vert_{\partial\mathcal{T}_{h}}\\
&\le C\left(\sum_{K\in\mathcal{T}_{h}}\int_{\partial K}(\hat{\eta}_{u})^{2}\mr{d}s\right)^{\frac{1}{2}}|||\bm{\xi}|||\\
&\le C\left(N^{2}N^{-1}N^{-2(k+1)}(\ln N)^{2(k+1)}\right)^{\frac{1}{2}}|||\bm{\xi}|||\\
&\le CN^{-(k+\frac{1}{2})}(\ln N)^{k+1}|||\bm{\xi}|||.
\end{aligned}
\end{equation*}

For $\mathcal{I}_{5}$, by using the assumption on $\tau$ (3) and (1), we have
\begin{equation*}
\begin{aligned}
<\tau\eta_{u}, \xi_{u}-\hat{\xi}_{u}>_{\partial\mathcal{T}_{h}}
& C\Vert\tau^{\frac{1}{2}}\eta_{u}\Vert_{\partial\mathcal{T}_{h}}\Vert\tau^{\frac{1}{2}}(\xi_{u}-\hat{\xi}_{u})\Vert_{\partial\mathcal{T}_{h}}\\
&\le C\Vert\tau^{\frac{1}{2}}\eta_{u}\Vert_{\partial\mathcal{T}_{h}}\left\Vert\left|\tau-\frac{1}{2}\bm{\beta}\cdot \bm{n}+\frac{1}{2}\bm{\beta}\cdot \bm{n}\right|^{
\frac{1}{2}}(\xi_{u}-\hat{\xi}_{u})\right\Vert_{\partial\mathcal{T}_{h}}\\
&\le C\Vert\tau^{\frac{1}{2}}\eta_{u}\Vert_{\partial\mathcal{T}_{h}}\left\Vert\left|\tau-\frac{1}{2}\bm{\beta}\cdot \bm{n}\right|^{
\frac{1}{2}}(\xi_{u}-\hat{\xi}_{u})\right\Vert_{\partial\mathcal{T}_{h}}\\
&\le C\tau^{\frac{1}{2}}\left(N\Vert\eta_{u}\Vert_{L^{\infty}(\Omega)}^{2}\right)^{\frac{1}{2}}|||\bm{\xi}|||\\
&\le C\tau^{\frac{1}{2}}N^{-(k+\frac{1}{2})}(\ln N)^{k+1}|||\bm{\xi}|||\\
&\le CN^{-(k+\frac{1}{2})}(\ln N)^{k+1}|||\bm{\xi}|||.
\end{aligned}
\end{equation*}

Combining all the estimates above,
\begin{equation}\label{PP-1}
|||\bm{\xi}|||\le CN^{-(k+\frac{1}{2})}(\ln N)^{k+1}.
\end{equation}

Next, we will present two main results of this paper.

\begin{theorem}\label{main}
Suppose that Assumption \ref{ass:S-1} holds and take $\sigma\ge k+1$ on Shishkin mesh, we derive
\begin{align*}
|||(\bm{\Pi_{1}q}-\bm{q}_{h}, \Pi_{2}u-u_{h}, Pu-\hat{u}_{h})|||\le CN^{-(k+\frac{1}{2})}(\ln N)^{k+1},
\end{align*}
where $(\bm{\Pi_{1}q}, \Pi_{2}u, Pu)$ is the projection of the exact solution of  \eqref{eq:S-1} and $(\bm{q}_{h}, u_{h}, \hat{u}_{h})$ is the solution of \eqref{eq:SD}, respectively. 
\end{theorem}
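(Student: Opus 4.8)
The plan is straightforward, since all the heavy lifting has already been done in the lead-up to the statement. Theorem~\ref{main} is nothing more than a restatement of the bound \eqref{PP-1} in the notation $\bm{\xi}=(\bm{\xi_{q}},\xi_u,\hat\xi_u)=(\bm{q}_h-\bm{\Pi_1 q},u_h-\Pi_2 u,\hat u_h-Pu)$. So the proof amounts to: (i) recalling that, by coercivity \eqref{eq:coercity} and the Galerkin orthogonality of Lemma~\ref{Galerkin orthogonality property}, one has $|||\bm{\xi}|||^2\le B(\bm{\xi},\bm{\xi})=B(\bm{\eta},\bm{\xi})$, where $\bm{\eta}=(\bm{q}-\bm{\Pi_1 q},u-\Pi_2 u,u-Pu)$; (ii) noting that the orthogonality properties of the local $L^2$ projections $\bm{\Pi_1}$, $\Pi_2$, $P$ annihilate every term of $B(\bm{\eta},\bm{\xi})$ except the five terms $\mathcal I_1,\dots,\mathcal I_5$; (iii) invoking the five individual estimates for $\mathcal I_1,\dots,\mathcal I_5$ established above, each of which is bounded by $CN^{-(k+\frac12)}(\ln N)^{k+1}|||\bm{\xi}|||$; and (iv) dividing through by $|||\bm{\xi}|||$ to conclude. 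In symbols, the chain is $|||\bm{\xi}|||^2\le \sum_{m=1}^5\mathcal I_m\le CN^{-(k+\frac12)}(\ln N)^{k+1}\,|||\bm{\xi}|||$, hence $|||\bm{\xi}|||\le CN^{-(k+\frac12)}(\ln N)^{k+1}$, which is exactly the assertion.

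The one place that genuinely deserves care—and which I expect is the real content the authors want to highlight—is the estimate of $\mathcal I_1=-(\bm{\beta}\eta_u,\nabla\xi_u)_{\mathcal T_h}$ on the layer subregions $\Omega_x$ and $\Omega_y$. On $\Omega_s\cup\Omega_{xy}$ one simply subtracts the piecewise-constant projection $\bm{P}_{0,h}\bm{\beta}$ (which kills the term against $\nabla\xi_u\in\bm{V}_h$ up to the $O(h)$ consistency error) and applies an inverse inequality. But on $\Omega_x$, the $x$-derivative $\|(\xi_u)_x\|_{\Omega_x}$ cannot be controlled by a crude inverse inequality without picking up a factor $h_{i,x}^{-1}\sim(\varepsilon N^{-1}\ln N)^{-1}$, which is too large; the key device is Lemma~\ref{L-1}, which trades this for $h_{i,x}^{-1/2}|||\bm{\xi}|||$, and then the smallness of $\|\eta_u\|_{\Omega_x}\le C\varepsilon^{1/2}N^{-(k+1)}(\ln N)^{k+3/2}$ (from Theorem~\ref{error}-type estimates on the fine region) absorbs the remaining $\varepsilon^{-1/2}$. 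The transverse derivative $\|(\xi_u)_y\|_{\Omega_x}$ is handled by the standard inverse inequality in the $y$-direction, where the mesh is coarse so $h_{j,y}^{-1}\sim N$ is harmless, combined with the extra $N^{-1}$ gained from $\beta_2-\bar\beta_2$. These are precisely the computations displayed above, so at the level of the proof of Theorem~\ref{main} they are simply cited.

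I would therefore write the proof of Theorem~\ref{main} as essentially a two-line pointer: \emph{``By \eqref{eq:coercity} and Lemma~\ref{Galerkin orthogonality property}, $|||\bm{\xi}|||^2\le B(\bm{\eta},\bm{\xi})$; after cancelling the projection-orthogonal terms this reduces to $\mathcal I_1+\cdots+\mathcal I_5$, and the estimates established above for $\mathcal I_1,\dots,\mathcal I_5$ yield \eqref{PP-1}, which is the claim.''} If a self-contained argument is wanted instead, the only genuine obstacle to reproduce is the $\mathcal I_1$ bound on $\Omega_x\cup\Omega_y$ as just described; every other term ($\mathcal I_2$ via Cauchy--Schwarz and the $L^2$ projection error, $\mathcal I_3,\mathcal I_4,\mathcal I_5$ via trace/inverse inequalities together with properties (1)--(3) of $\tau$ to swap $|\bm{\beta}\cdot\bm{n}|$ and $\tau$ for $(\tau-\tfrac12\bm{\beta}\cdot\bm{n})$ inside the energy norm) is routine. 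No new lemma is needed beyond what the excerpt already provides.
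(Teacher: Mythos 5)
Your proposal is correct and coincides with the paper's own argument: the theorem is indeed just the bound \eqref{PP-1}, obtained from coercivity \eqref{eq:coercity} and Lemma \ref{Galerkin orthogonality property}, cancellation via the local $L^{2}$ projections, and the five estimates $\mathcal{I}_{1},\dots,\mathcal{I}_{5}$, with Lemma \ref{L-1} as the key ingredient for the convection term on $\Omega_{x}\cup\Omega_{y}$. Your reading of where the genuine difficulty lies (and how the $\varepsilon^{1/2}$ in $\Vert\eta_{u}\Vert_{\Omega_{x}}$ absorbs the $h_{i,x}^{-1/2}$ factor) matches the paper exactly.
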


\begin{theorem}\label{main-2}
Let Assumption \ref{ass:S-1} hold and $\sigma\ge k+1$ on Shishkin mesh, one has
\begin{align*}
|||\bm{e}|||\le CN^{-(k+\frac{1}{2})}(\ln N)^{k+1},
\end{align*}
where $\bm{e}:=(\bm{e_{q}}, e_{u}, \hat{e}_{u}):=(\bm{q}-\bm{q}_{h}, u-u_{h}, u-\hat{u}_{h})$, $(\bm{q}, u)$ is the exact solution of  \eqref{eq:S-1} and $(\bm{q}_{h}, u_{h}, \hat{u}_{h})$ is the solution of \eqref{eq:SD}, respectively. 
\end{theorem}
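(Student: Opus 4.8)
The plan is to derive Theorem \ref{main-2} directly from Theorem \ref{main} by the triangle inequality applied to the decomposition $\bm{e} = \bm{\eta} - \bm{\xi}$ from \eqref{PPP-1}. That is, I would write
\[
|||\bm{e}||| = |||\bm{\eta} - \bm{\xi}||| \le |||\bm{\eta}||| + |||\bm{\xi}|||,
\]
which is legitimate because $|||\cdot|||$ is a genuine seminorm on $\bm{V}_h \times W_h \times M_h(0)$ (it is built from $L^2$-type norms, hence satisfies the triangle inequality). The first term $|||\bm{\eta}|||$ is bounded by $CN^{-(k+\frac12)}(\ln N)^{k+1}$ by Theorem \ref{error}, and the second term $|||\bm{\xi}|||$ satisfies exactly the same bound, which is the content of Theorem \ref{main} (equivalently, the estimate \eqref{PP-1} established just above). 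Adding the two bounds and absorbing the factor $2$ into the generic constant $C$ gives the claimed estimate.

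The one small subtlety worth spelling out is that Theorem \ref{main} is stated for $\bm{\xi} = (\bm{\Pi_1 q} - \bm{q}_h,\, \Pi_2 u - u_h,\, Pu - \hat{u}_h)$, whereas the decomposition \eqref{PPP-1} uses $\bm{\xi} = (\bm{q}_h - \bm{\Pi_1 q},\, u_h - \Pi_2 u,\, \hat{u}_h - Pu)$; since $|||\cdot|||$ is even (it depends only on squared norms), these two have the same energy norm, so no sign bookkeeping is needed. One should also note that $\bm{\eta}$ is not itself an element of the discrete space, but the energy norm $|||\cdot|||$ as written in \eqref{eq:SS-1} is defined by an explicit formula that makes sense for any sufficiently regular triple, so Theorem \ref{error} applies to it verbatim; this is already implicitly used in the statement of Theorem \ref{error}.

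Since every ingredient is already in place, there is essentially no obstacle here — the proof is a two-line consequence of Theorems \ref{error} and \ref{main}. If anything, the only place requiring a moment of care is confirming that the regularity needed to invoke the projection error estimate \eqref{P-111} (namely $u \in H^{k+2}(\Omega)$ componentwise, via the solution decomposition \eqref{eq:decomposition}) holds, but this was already secured in Section 2 and used in the proof of Theorem \ref{error}. I would therefore keep the proof to a single displayed chain of inequalities citing Theorem \ref{error} for $|||\bm{\eta}|||$ and Theorem \ref{main} for $|||\bm{\xi}|||$.
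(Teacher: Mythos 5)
Your proposal is correct and follows exactly the paper's own argument: the paper also proves Theorem \ref{main-2} by combining Theorem \ref{error}, Theorem \ref{main} and the triangle inequality applied to the decomposition \eqref{PPP-1}. The extra remarks on the sign convention for $\bm{\xi}$ and on the norm being well defined for $\bm{\eta}$ are fine but not needed beyond what the paper states.
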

\begin{proof}
From Theorem \ref{error}, Theorem \ref{main} and the triangle inequality, one can derive this conclusion without any difficulties.
\end{proof}

\section{Numerical experiment}
In order to verify the main theoretical conclusion, we consider the following convection-diffusion problem
\begin{equation}\label{eq:KK-2}
\left\{
\begin{aligned}
& -\varepsilon\Delta u+\bm{\beta}\cdot\nabla u+u=f, \quad \text{in $\Omega:= (0,1)^{2}$},\\
& u=0,\quad \text{on $\partial\Omega$},
\end{aligned}
\right.
\end{equation}
where $\bm{\beta}=(2-x,3-y^3)$ and $f(x,y)$ is chosen to satisfy that
\begin{equation*}
u(x,y)=y^3 \sin x (1-e^{-(1-x)/\varepsilon})(1-e^{-2(1-y)/\varepsilon})
\end{equation*}
is the exact solution of the \eqref{eq:KK-2}.

In the experiment, we take $\varepsilon= 10^{-4}, 10^{-5}, \cdots,10^{-8}, N =4, 8, 16, \cdots, 256$ and $k = 1, 2$. 
Furthermore, on Shishkin mesh, set $\beta_1=1$, $\beta_2=2$, $\sigma = k + 1$ and stabilization function $\tau=3$.
Then the corresponding convergence rate is defined as
$$p_{h}= \frac{\ln \bm{e}_{h}-\ln  \bm{e}_{2h}}{(\frac{2\ln N}{\ln 2N})},$$
where
$ \bm{e}_{h}=|||(\bm{q}-\bm{q}_{h}, u-u_{h}, u-\hat{u}_{h})|||$ is the computed error for the particular $\varepsilon$ and $N$. 
Then we will present the numerical results in Table \ref{table:1} and \ref{table:2}, which support Theorem \ref{main-2} sharply.

\begin{table}[H]
\caption{$|||(\bm{q}-\bm{q}_{h}, u-u_{h}, u-\hat{u}_{h})|||$ in the case of $k=1$}
\footnotesize
\resizebox{115mm}{25mm}{
\setlength\tabcolsep{4pt}
\begin{tabular*}{\textwidth}{@{\extracolsep{\fill}} c cccccccccc}
\cline{1-11}
    &\multicolumn{10}{c}{$\varepsilon$ }\\
\cline{1-11}
            \multirow{2}{*}{ $N$ }   &\multicolumn{2}{c}{$10^{-4}$} &\multicolumn{2}{c}{$10^{-5}$}  &\multicolumn{2}{c}{$10^{-6}$}   
&\multicolumn{2}{c}{$10^{-7}$} &\multicolumn{2}{c}{$10^{-8}$}\\

\cline{2-11}&$e_h$&$p_h$&$e_h$&$p_h$&$e_h$&$p_h$&$e_h$&$p_h$&$e_h$&$p_h$\\
\cline{1-11}

             $4$      &1.188e-1  &1.42 &1.188e-1  &1.42 &1.188e-1  &1.42 &1.188e-1  &1.42  &1.188e-1  &1.42\\
             $8$       &6.687e-2  &1.35 &6.687e-2	  &1.35 &6.687e-2  &1.35 &6.687e-2  &1.35  &6.687e-2  &1.35\\
             $16$       &3.553e-2  &1.40  &3.553e-2 &1.40 &3.553e-2  &1.40 &3.553e-2  &1.40  &3.553e-2  &1.40  \\
             $32$       &1.740e-2  &1.44 &1.739e-2  &1.44 & 1.739e-2  &1.44 & 1.739e-2 &1.44  & 1.739e-2 &1.44\\
             $64$       &8.012e-3  &1.46  &8.011e-3	  &1.46 &8.011e-3 &1.46  &8.011e-3  &1.46   &8.011e-3  &1.46   \\
             $128$      &3.540e-3 &1.47 &3.539e-3 &1.47 &3.539e-3  &1.47 &3.539e-3  &1.47 &3.539e-3  &1.47\\
             $256$     &1.519e-3	 &--- &1.519e-3 &---   &1.519e-3 &---   &1.519e-3 &---    &1.519e-3 &---\\

\cline{1-11}
\end{tabular*}}
\label{table:1}
\end{table}

\begin{table}[H]
\caption{$|||(\bm{q}-\bm{q}_{h}, u-u_{h}, u-\hat{u}_{h})|||$ in the case of $k=2$}
\footnotesize
\resizebox{120mm}{25mm}{
\setlength\tabcolsep{4pt}
\begin{tabular*}{\textwidth}{@{\extracolsep{\fill}} c cccccccccccc}
\cline{1-11}
    &\multicolumn{10}{c}{$\varepsilon$ }\\
\cline{1-11}
            \multirow{2}{*}{ $N$ }   &\multicolumn{2}{c}{$10^{-4}$} &\multicolumn{2}{c}{$10^{-5}$}  &\multicolumn{2}{c}{$10^{-6}$}   
&\multicolumn{2}{c}{$10^{-7}$} &\multicolumn{2}{c}{$10^{-8}$} \\

\cline{2-11}&$e_h$&$p_h$&$e_h$&$p_h$&$e_h$&$p_h$&$e_h$&$p_h$&$e_h$&$p_h$\\
\cline{1-11}

             $4$       &2.764e-2  &1.54 &2.764e-2  &1.54 &2.764e-2  &1.54  &2.764e-2  &1.54  &2.764e-2  &1.54  \\
             $8$       &1.482e-2  &1.94 &1.482e-2  &1.94 &1.482e-2  &1.94  &1.482e-2  &1.94   &1.482e-2  &1.94\\
             $16$       &5.966e-3  &2.19 &5.965e-3 &2.19  &5.965e-3 &2.19   &5.965e-3 &2.19  &5.965e-3 &2.19\\
             $32$       &1.945e-3 &2.33  &1.945e-3 &2.33 &1.945e-3 &2.33  &1.945e-3 &2.33    &1.945e-3 &2.33 \\
           $64$       &5.543e-4  &2.40  &5.543e-4  &2.40 &5.543e-4  &2.40  &5.543e-4  &2.40  &5.543e-4  &2.40 \\
             $128$      &1.451e-4  &2.43 &1.451e-4  &2.43 &1.451e-4  &2.43  &1.451e-4  &2.43 &1.451e-4  &2.43     \\
           $256$      &3.587e-5  &--- &3.587e-5  &--- &3.587e-5  &---  &3.587e-5  &---   &3.587e-5  &---     \\

\cline{1-11}
\end{tabular*}}
\label{table:2}
\end{table}


\begin{table}[H]
\caption{$|||(\bm{\Pi_{1}q}-\bm{q}_{h}, \Pi_{2}u-u_{h}, Pu-\hat{u}_{h})|||$ in the case of $k=1$}
\footnotesize
\resizebox{115mm}{25mm}{
\setlength\tabcolsep{4pt}
\begin{tabular*}{\textwidth}{@{\extracolsep{\fill}} c cccccccccc}
\cline{1-11}
    &\multicolumn{10}{c}{$\varepsilon$ }\\
\cline{1-11}
            \multirow{2}{*}{ $N$ }   &\multicolumn{2}{c}{$10^{-4}$} &\multicolumn{2}{c}{$10^{-5}$}  &\multicolumn{2}{c}{$10^{-6}$}   
&\multicolumn{2}{c}{$10^{-7}$} &\multicolumn{2}{c}{$10^{-8}$}\\

\cline{2-11}&$e_h$&$p_h$&$e_h$&$p_h$&$e_h$&$p_h$&$e_h$&$p_h$&$e_h$&$p_h$\\
\cline{1-11}

             $4$      & 2.670e+0  &4.17 &8.442e+0  &4.18 &8.442e+0  &4.18 &8.442e+0  &4.18 &8.442e+0  &4.18\\
             $8$       &4.931e-1  &3.55 &1.549e+0  &3.66 &1.549e+0  &3.66 &1.549e+0  &3.66 &1.549e+0  &3.66\\
             $16$       &9.297e-2  &2.82   &2.770e-1  &3.31  &2.770e-1  &3.31  &2.770e-1  &3.31   &2.770e-1  &3.31  \\
             $32$       &2.204e-2  &1.93 &5.115e-2  &2.80  &5.115e-2  &2.80  &5.115e-2  &2.80 &5.115e-2  &2.80 \\
             $64$       &7.805e-3  &1.52  &1.129e-2  &2.02  &1.129e-2  &2.02  &1.129e-2  &2.02  &1.129e-2  &2.02    \\
             $128$      &3.342e-3 &1.44   &3.639e-3 &1.44  &3.639e-3 &1.44   &3.639e-3 &1.44  &3.639e-3 &1.44  \\
             $256$     &1.455e-3	 &--- &1.455e-3	 &--- &1.455e-3	 &---   &1.455e-3	 &---  &1.455e-3	 &--- \\

\cline{1-11}
\end{tabular*}}
\label{table:3}
\end{table}

\begin{table}[H]
\caption{$|||(\bm{\Pi_{1}q}-\bm{q}_{h}, \Pi_{2}u-u_{h}, Pu-\hat{u}_{h})|||$ in the case of $k=2$}
\footnotesize
\resizebox{120mm}{25mm}{
\setlength\tabcolsep{4pt}
\begin{tabular*}{\textwidth}{@{\extracolsep{\fill}} c cccccccccccc}
\cline{1-11}
    &\multicolumn{10}{c}{$\varepsilon$ }\\
\cline{1-11}
            \multirow{2}{*}{ $N$ }   &\multicolumn{2}{c}{$10^{-4}$} &\multicolumn{2}{c}{$10^{-5}$}  &\multicolumn{2}{c}{$10^{-6}$}   
&\multicolumn{2}{c}{$10^{-7}$} &\multicolumn{2}{c}{$10^{-8}$} \\

\cline{2-11}&$e_h$&$p_h$&$e_h$&$p_h$&$e_h$&$p_h$&$e_h$&$p_h$&$e_h$&$p_h$\\
\cline{1-11}

             $4$       &4.389e-1  &5.86 &4.389e-1  &5.86 &4.389e-1  &5.86 &4.389e-1  &5.86  &4.389e-1  &5.86 \\
             $8$       &4.086e-2  &4.13 &4.086e-2  &4.13 &4.086e-2  &4.13   &4.086e-2  &4.13  &4.086e-2  &4.13 \\
             $16$       &5.857e-3  &2.60&5.857e-3  &2.60  &5.857e-3  &2.60  &5.857e-3  &2.60  &5.857e-3  &2.60 \\
             $32$       &1.553e-3  &2.28  &1.553e-3  &2.28 &1.553e-3  &2.28 &1.553e-3  &2.28    &1.553e-3  &2.28\\
           $64$       &4.534e-4  &2.29  &4.534e-4  &2.29  &4.534e-4  &2.29  &4.534e-4  &2.29  &4.534e-4  &2.29 \\
             $128$      &1.260e-4  &2.34&1.260e-4  &2.34 &1.260e-4  &2.34 &1.260e-4  &2.34&1.260e-4  &2.34\\
           $256$      &3.282e-5  &--- &3.282e-5  &--- &3.282e-5  &---  &3.282e-5 &---   &3.282e-5  &---     \\

\cline{1-11}
\end{tabular*}}
\label{table:4}
\end{table}


\end{CJK}

\end{document}